\tikzset{
  on each segment/.style={
    decorate,
    decoration={
      show path construction,
      moveto code={},
      lineto code={
        \path [#1]
        (\tikzinputsegmentfirst) -- (\tikzinputsegmentlast);
      },
      curveto code={
        \path [#1] (\tikzinputsegmentfirst)
        .. controls
        (\tikzinputsegmentsupporta) and (\tikzinputsegmentsupportb)
        ..
        (\tikzinputsegmentlast);
      },
      closepath code={
        \path [#1]
        (\tikzinputsegmentfirst) -- (\tikzinputsegmentlast);
      },
    },
  },
  mid arrow/.style={postaction={decorate,decoration={
        markings,
        mark=at position .5 with {\arrow[#1]{stealth}}
      }}},
}
\theoremstyle:=definition,remark,plain\do{%
        \expandafter\g@addto@macro\csname th@\theoremstyle\endcsname{%
            \addtolength\thm@preskip\parskip
            }%
        }
\newtheorem{theorem}{Theorem}[section]
\newtheorem*{theorem*}{Theorem}
\newtheorem{lemma}[theorem]{Lemma}
\newtheorem{prop}[theorem]{Proposition}
\newtheorem{cor}[theorem]{Corollary}
\newtheorem{ques}[theorem]{Question}
\theoremstyle{definition}
\newtheorem*{remark*}{Remark}
\newtheorem{claim}[theorem]{Claim}
\newtheorem*{ques2}{Question~\ref{que-main2}}
\newtheorem*{ques1}{Question~\ref{que-main}}
\newcommand{\df}[1]{{{\color{black}\bf\em #1}}}
\newcommand{\from}{\leftarrow}
\begin{document}

\title{On operations preserving semi-transitive orientability of graphs}

\author{
{Ilkyoo Choi}\thanks{
Supported by the Basic Science Research Program through
the National Research Foundation of Korea (NRF) funded by the Ministry of
Education (NRF-2018R1D1A1B07043049), and also by Hankuk University of Foreign
Studies Research Fund.
Department of Mathematics, Hankuk University of Foreign Studies, Yongin-si, Gyeonggi-do, Republic of Korea.
\texttt{ilkyoo@hufs.ac.kr}.
}
\and
Jinha Kim\thanks{
Corresponding author. 
Department of Mathematical Sciences, Seoul National University, Seoul, Republic of Korea.
\texttt{kjh1210@snu.ac.kr}
}
\and
Minki Kim\thanks{
Supported by the National Research Foundation of Korea (NRF) grant funded by the Ministry of Education (NRF-2016R1D1A1B03930998).
Department of Mathematics, Technion -- Israel Institute of Technology, Haifa, Israel.
\texttt{kimminki@campus.technion.ac.il}
}
}

\date\today

\maketitle

\begin{abstract}
We consider the class of semi-transitively orientable graphs, which is a much larger class of graphs compared to transitively orientable graphs, in other words, comparability graphs. 
Ever since the concept of a semi-transitive orientation was defined as a crucial ingredient of the characterization of alternation graphs, also known as word-representable graphs, it has sparked independent interest. 

In this paper, we investigate graph operations and graph products that preserve semi-transitive orientability of graphs. 
The main theme of this paper is to determine which graph operations satisfy the following statement: if a graph operation is possible on a semi-transitively orientable graph, then the same graph operation can be executed on the graph while preserving the semi-transitive orientability. 
We were able to prove that this statement is true for edge-deletions, edge-additions, and edge-liftings. 
Moreover, for all three graph operations, we show that the initial semi-transitive orientation can be extended to the new graph obtained by the graph operation. 

Also, Kitaev and Lozin explicitly asked if certain graph products preserve the semi-transitive orientability. 
We answer their question in the negative for the tensor product, lexicographic product, and strong product.
We also push the investigation further and initiate the study of sufficient conditions that guarantee a certain graph operation to preserve the semi-transitive orientability.
\end{abstract}

\section{Introduction}
All graphs in this paper are undirected and simple, which means no loops and no multiple edges. 
Given a graph $G$, let $V(G)$ and $E(G)$ denote the vertex set and the edge set of $G$, respectively. 
An \df{orientation} of a graph $G$ assigns to each edge $uv\in E(G)$ a direction, either from $u$ to $v$ or from $v$ to $u$. 
An edge $uv$ directed from $u$ to $v$ is denoted by $u\to v$.
A \df{directed graph}, or a \df{digraph}, is a graph with an orientation. 
An orientation $\phi$ of $G$ is said to be \df{acyclic} if there is no directed cycle in the directed graph $\phi(G)$, and $\phi$ is said to be \df{transitive} if for every three distinct vertices $v_1, v_2, v_3 \in V(G)$, there is a directed edge ${v_1\to v_3}$ in $\phi(G)$ whenever ${v_1\to v_2}$ and ${v_2\to v_3}$ exist in $\phi(G)$.
It is not hard to see that a transitive orientation of a graph is always acyclic.
An undirected graph admitting a transitive orientation is also known as a \df{comparability graph}.
Due to their deep connection with partially ordered sets, comparability graphs have become one of the most actively researched graph classes throughout the literature. 
Examples of comparability graphs are complete multipartite graphs, permutation graphs, cographs~\cite{Jung78}, and threshold graphs. 

In this paper, we consider a graph class that is more general than the class of comparability graphs, namely, graphs admitting semi-transitive orientations. 
The definition of a semi-transitive orientation was first given in the study of word-representable graphs (see Section~\ref{sec:word}), but we find the orientation of independent interest. 

Given an acyclic orientation $\phi$ of a graph $G$, a \df{shortcut} is a non-transitive induced subdigraph on $\{v_1,\dots,v_n\}\subset V(G)$ that contains a directed edge $v_1 \to v_n$ and a directed path $v_1\to \cdots \to v_n$ on at least four vertices. 
Note that if $S$ is a shortcut in $\phi(G)$, then the induced subgraph $G[V(S)]$ of $G$ is not complete.
The orientation $\phi$ is said to be \df{semi-transitive} if $\phi(G)$ does not contain a shortcut.
If one can find a semi-transitive orientation of $G$, then we say that $G$ has a semi-transitive orientation, or $G$ is \df{semi-transitively orientable}. 
Clearly, comparability graphs are semi-transitively orientable, and there exist infinitely many graphs with semi-transitive orientations that are not comparability graphs. 
For example, every cycle is semi-transitively orientable while every odd cycle of length at least $5$ does not have a transitive orientation.
Hence, the class of semi-transitively orientable graphs is a much larger class of graphs compared to the class of comparability graphs.

Observe that given a graph $G$ with an arbitrary semi-transitive orientation $\phi$, the induced subdigraph of $\phi(G)$ on a clique of $G$ must be transitive.
In fact, Kitaev and Pyatkin~\cite{KP2008} proved the following stronger statement that every semi-transitive orientation of a graph is ``locally'' transitive.
The set of neighbors of a vertex $v$ is denoted by $N(v)$.

\begin{theorem}[\cite{KP2008}]\label{nb}
If $v$ is a vertex of a semi-transitively orientable graph $G$, then the induced subgraph of $G$ on $N(v)$ is a comparability graph.
\end{theorem}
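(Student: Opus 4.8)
The plan is to fix a semi-transitive orientation $\phi$ of $G$ and to build from it an explicit transitive orientation $\psi$ of $H:=G[N(v)]$. Partition $N(v)$ into the in-neighbors $N^-=\{u: u\to v\}$ and the out-neighbors $N^+=\{u: v\to u\}$ of $v$ under $\phi$. Acyclicity of $\phi$ immediately forces every edge between $N^-$ and $N^+$ to point from $N^-$ to $N^+$ (otherwise $a\to v\to b\to a$ is a directed triangle). A short shortcut argument also shows that $\phi$ restricted to $N^+$ is transitive: if $a\to b\to c$ with $a,b,c\in N^+$ but $ac\notin E(G)$, then $v\to a\to b\to c$ together with the edge $v\to c$ is a shortcut; symmetrically $\phi$ is transitive on $N^-$ (using the path $a\to b\to c\to v$ and the edge $a\to v$). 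The point where the argument becomes delicate is that $\phi$ restricted to all of $N(v)$ need not be transitive: a configuration such as $a\in N^-$, $b,c\in N^+$ with $a\to b\to c$ and $ac\notin E(G)$ is perfectly shortcut-free, so one cannot simply restrict $\phi$.

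This is the main obstacle, and the fix is to define $\psi$ by reversing $\phi$ inside each of $N^-$ and $N^+$ while keeping all cross edges oriented from $N^-$ to $N^+$. I would then use the standard reformulation that an orientation is transitive if and only if it is acyclic and contains no induced directed $P_3$ (a path $x\to y\to z$ with $xz\notin E$); indeed, acyclicity rules out the closing edge being reversed, so only the presence of $xz$ has to be checked. Acyclicity of $\psi$ is clear, since the within-part orientations are reverses of acyclic orientations, hence acyclic, and every cross edge points from $N^-$ to $N^+$ and is never returned. Because of this one-way cross structure, any directed $2$-path $x\to y\to z$ of $\psi$ has its vertices typed as one of $(-,-,-)$, $(-,-,+)$, $(-,+,+)$, $(+,+,+)$; the two pure types are handled by the transitivity of $\phi$, equivalently of its reverse, on each part.

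The crux is therefore the two mixed types, and here the reversal is exactly what makes a shortcut of $\phi$ appear. For type $(-,-,+)$, a $\psi$-path $x\to y\to z$ with $x,y\in N^-$, $z\in N^+$ unwinds to $y\to x$ and $y\to z$ in $\phi$, and then $y\to x\to v\to z$ is a directed $4$-path carrying the edge $y\to z$; if $xz\notin E(G)$ this induced subdigraph is non-transitive, i.e.\ a shortcut, contradicting semi-transitivity, so $xz\in E(G)$. For type $(-,+,+)$, a $\psi$-path $x\to y\to z$ with $x\in N^-$, $y,z\in N^+$ unwinds to $x\to y$ and $z\to y$ in $\phi$, and $x\to v\to z\to y$ is a directed $4$-path carrying the edge $x\to y$; again $xz\notin E(G)$ would make it a shortcut. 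In both cases $xz\in E(G)$, and acyclicity forces it to be oriented $x\to z$, so $\psi$ has no induced directed $P_3$ and is transitive. Hence $H=G[N(v)]$ is a comparability graph. I expect the only real work to be verifying that these four types exhaust the directed $2$-paths and checking the two shortcut configurations; everything else is bookkeeping.
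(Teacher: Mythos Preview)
The paper does not supply its own proof of this statement; Theorem~\ref{nb} is quoted from \cite{KP2008} without argument. So there is nothing in the paper to compare your proposal against.

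That said, your argument is correct. The key idea---reversing $\phi$ inside each of $N^-$ and $N^+$ while keeping the cross edges---is exactly what is needed, and your case analysis is sound. A couple of minor remarks: in the mixed cases you should note (as you implicitly use) that the four vertices $y,x,v,z$ (respectively $x,v,z,y$) are distinct, which follows since $v\notin N(v)$ and $x\neq y$, $y\neq z$; and in verifying that the induced subdigraph on those four vertices is non-transitive, the missing edge you point to is $xz$, but in fact the witness of non-transitivity is the $2$-path through $v$ (e.g.\ $x\to v\to z$ with $xz\notin E(G)$), which you have. The reduction ``transitive $\Leftrightarrow$ acyclic with no induced directed $P_3$'' is standard and correctly invoked. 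Your proof is self-contained and valid.
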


By Theorem~\ref{nb}, a graph $G$ does not have a semi-transitive orientation if there exists a vertex $v\in V(G)$ such that $G[N(v)]$ is not a comparability graph.
Since an odd cycle of length at least $5$ is not a comparability graph, a wheel graph on an even number of vertices, larger than $5$, does not have a semi-transitive orientation. 
Note that a wheel graph, or simply, a \df{wheel}, on $n+1$ vertices, denoted by $W_{n}$, is a graph where one vertex is adjacent to all vertices of a cycle on $n$ vertices. 
It is worth mentioning that the graph $W_5$ in Figure~\ref{wheel5} is a graph with the minimum number of vertices that is not semi-transitively orientable, as shown in~\cite{KP2008}.
In other words, every graph on at most $5$ vertices is semi-transitively orientable.
Moreover, all graphs on $6$ vertices other than $W_5$ have semi-transitive orientations~\cite{KL2015}; 
we state this as Theorem~\ref{thm:6vx}.

\begin{theorem}[\cite{KP2008,KL2015}]\label{thm:6vx}
A graph on at most $6$ vertices has a semi-transitive orientation if and only if it is not $W_5$. 
\end{theorem}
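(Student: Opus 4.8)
The plan is to prove the two directions separately: the ``only if'' direction is immediate from Theorem~\ref{nb}, while the ``if'' direction proceeds by first using Theorem~\ref{nb} to isolate $W_5$ as the only graph carrying the obvious obstruction, and then constructing semi-transitive orientations for all remaining graphs.

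For the ``only if'' direction, I would apply Theorem~\ref{nb} directly. In $W_5$, the hub vertex $v$ is adjacent to all five vertices of the rim, so $G[N(v)]$ is exactly the cycle $C_5$. Since $C_5$ is an odd cycle of length at least $5$, it is not a comparability graph, and hence by Theorem~\ref{nb} the graph $W_5$ has no semi-transitive orientation.

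For the ``if'' direction, the first step is a reduction based on the fact that $C_5$ is the \emph{unique} graph on at most five vertices that is not a comparability graph: every graph on at most four vertices is a comparability graph, and among all five-vertex graphs only $C_5$ fails to be one. Now let $G$ be a graph on six vertices. For any vertex $v$ we have $|N(v)| \le 5$, so if $G[N(v)]$ is not a comparability graph then necessarily $G[N(v)] = C_5$; this forces $\deg(v) = 5$ with the five neighbors inducing a chordless $C_5$, which (as $v$ together with its neighbors already accounts for all six vertices and all edges are then determined) means $G = W_5$. Consequently, for every six-vertex graph $G \ne W_5$, each induced neighborhood $G[N(v)]$ is a comparability graph, so the obstruction of Theorem~\ref{nb} never arises. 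It then remains to actually exhibit a semi-transitive orientation for each such $G$. The natural approach is to peel off a vertex: choose a suitable $v$, set $H = G - v$, take a semi-transitive orientation of $H$ (which exists since every graph on at most five vertices is semi-transitively orientable, by \cite{KP2008}), and extend it across the edges at $v$, orienting them compatibly with a transitive orientation of the comparability graph $G[N(v)]$.

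The main obstacle is precisely this extension step: one must verify that reinserting $v$ creates no shortcut, and a potential shortcut through $v$ need not lie entirely inside $N(v)$, so compatibility with the transitive orientation of $G[N(v)]$ alone does not settle the matter. Since excluding $W_5$ still leaves a large but finite family of six-vertex graphs, I expect the cleanest completion to be a finite verification---either a structured case analysis organized by $\deg(v)$ and the isomorphism type of $G[N(v)]$, or an exhaustive check over the $156$ graphs on six vertices, as in \cite{KL2015}. In every case the exclusion of $W_5$ is exactly what guarantees that the extension can be completed, and it is this local-to-global passage, from ``all neighborhoods are comparability graphs'' to ``the whole graph is semi-transitively orientable,'' that carries the bulk of the work.
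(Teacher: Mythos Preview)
The paper does not supply its own proof of Theorem~\ref{thm:6vx}; it is stated as a known result imported from \cite{KP2008,KL2015}. The surrounding text only explains the ``only if'' direction informally, by pointing to Theorem~\ref{nb} and the fact that the neighborhood of the hub of $W_5$ is $C_5$, which is not a comparability graph. Your treatment of that direction is exactly this argument, so it is in full agreement with the paper.

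For the ``if'' direction there is nothing in the paper to compare against, since the paper simply cites \cite{KL2015} for the exhaustive verification over all six-vertex graphs. Your reduction---observing that $C_5$ is the unique non-comparability graph on at most five vertices, so the obstruction from Theorem~\ref{nb} can only arise when $G=W_5$---is correct and a nice structural observation, but as you yourself note, it does not finish the proof: Theorem~\ref{nb} gives a necessary condition, not a sufficient one, and your proposed ``peel off a vertex and extend'' step has no general justification. You are right that the honest completion is a finite check (case analysis or computer search), and that is precisely what \cite{KL2015} does. So your proposal ultimately lands in the same place as the cited source, with the added value of isolating $W_5$ conceptually before resorting to enumeration.
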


See~\cite{KL2015} for all $25$ graphs on $7$ vertices with no semi-transitive orientations. 

\begin{figure}[htbp]
\centering
\begin{tikzpicture}[main node/.style={fill,circle,draw,inner sep=0pt,minimum size=3pt}, scale=1]
\node[main node] (v1) at ({90+72*0}:2){};
\node[main node] (v2) at ({90+72*1}:2){};
\node[main node] (v3) at ({90+72*2}:2){};
\node[main node] (v4) at ({90+72*3}:2){};
\node[main node] (v5) at ({90+72*4}:2){};
\node[main node, label=below:$v$] (v6) at (0,0){};

\draw (v1)--(v2)--(v3)--(v4)--(v5)--(v1);
\draw (v1)--(v6); \draw (v2)--(v6); \draw (v3)--(v6); \draw (v4)--(v6); \draw (v5)--(v6);
\end{tikzpicture}
\caption{The graph $W_5$, the only graph on at most $6$ vertices with no semi-transitive orientation.}
\label{wheel5}
\end{figure}
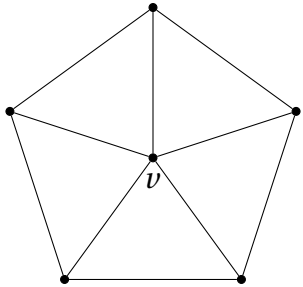

\subsection{Relation with word-representable graphs}\label{sec:word}

The concept of a semi-transitive orientation is a crucial ingredient in the characterization of word-representable graphs. 
Given a set $S$, a \df{word} over $S$ is a sequence of elements, or \df{letters}, of $S$. 
Given a word $W$ over $S$, we say two distinct letters $u$ and $v$ are \df{alternating} if $uu$ and $vv$ do not appear in the subword of $W$ induced by $u$ and $v$.  
A graph $G$ is \df{word-representable} if there exists a word $W$ over $V(G)$ such that for two vertices $u,v\in V(G)$, $uv \in E(G)$ if and only if $u$ and $v$ are alternating in $W$.
Word-representable graphs are also known as \df{representable graphs} or \df{alternation graphs}.

First introduced by Kitaev in 2004, word-representable graphs are a nice connection of graph theory and combinatorics on words, as word-representable graphs can be encoded as one-dimensional strings. 
It is related to various fields of (discrete) mathematics, as its motivation comes from the study of the {\em Perkins semigroups}, which lies in the core of semigroup theory.
For more details, see~\cite{KS2008}.

In \cite{HKP2016}, Halld\'{o}rsson, Kitaev, and Pyatkin characterized word-representable graphs in terms of semi-transitive orientations.
This characterization is also practical in the sense that it can be utilized to determine if a given graph is word-representable or not.
We end this subsection with presenting this characterization. 
For a comprehensive overview on the theory of word-representable graphs, see~\cite{Kit2017, KL2015}.

\begin{theorem}[\cite{HKP2016}]\label{stoiffwr}
A graph is word-representable if and only if it has a semi-transitive orientation.
\end{theorem}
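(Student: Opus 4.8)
The plan is to prove the two implications separately, using an orientation-by-first-occurrence map in one direction and an explicit inductive word construction in the other.

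For the forward implication (word-representable $\Rightarrow$ semi-transitively orientable), suppose $W$ is a word over $V(G)$ representing $G$, and write $f(x)$ for the position of the leftmost occurrence of a letter $x$ in $W$. I would orient each edge $uv \in E(G)$ by $u \to v$ whenever $f(u) < f(v)$, and call this orientation $\phi$. Acyclicity is immediate, since $\phi$ refines the linear order on $V(G)$ given by the first-occurrence positions, so no directed cycle can exist. The substance is to show $\phi$ has no shortcut. I would argue by contradiction: assume a shortcut on vertices $v_1, \dots, v_n$ with the arc $v_1 \to v_n$, the directed path $v_1 \to \cdots \to v_n$, and some non-adjacent pair $v_p, v_q$ with $p<q$ witnessing non-transitivity. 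Along the path the first occurrences strictly increase, so $f(v_1) < \cdots < f(v_n)$, and since $v_1 v_n$ is an edge the two letters alternate starting with $v_1$. The heart of the argument is a purely word-combinatorial lemma: once the endpoints alternate and the first occurrences are monotone along an edge-path, the non-adjacency of some intermediate pair forces a failure of alternation (an occurrence of $v_pv_p$ or $v_qv_q$ inside their induced subword) that is incompatible with the alternations imposed by the consecutive path edges. Establishing this pattern-forcing statement cleanly is the main delicate point of this direction.

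For the reverse implication (semi-transitively orientable $\Rightarrow$ word-representable), I would argue by induction on $|V(G)|$, which is the more constructive and, I expect, the harder direction. Fix a semi-transitive orientation $\phi$; by acyclicity there is a source $v$ (no in-arcs). The induced orientation on $G - v$ is again semi-transitive, so by the induction hypothesis there is a word $w'$ representing $G-v$. The task is to insert copies of the single letter $v$ into $w'$ to obtain a word $w$ that alternates with exactly the out-neighbours $N(v)$ of $v$ and with no other vertex. This is where Theorem~\ref{nb} and the no-shortcut hypothesis do the real work: they force $N(v)$ to carry a compatible (essentially transitive) order under $\phi$, so that a single interleaving pattern of the new letter can be threaded through $w'$, realizing alternation with every out-neighbour simultaneously without accidentally alternating with a non-neighbour. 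Checking that such an insertion always exists and does not disturb the adjacencies already encoded in $w'$ is the technical core of the whole theorem.

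I expect the reverse direction to be the main obstacle, specifically the existence of a valid interleaving for the reinserted source. The difficulty is that one new letter must simultaneously match many prescribed alternations; semi-transitivity is precisely the hypothesis that prevents these requirements from conflicting, so the crux is to convert the absence of shortcuts into an explicit ordering of $N(v)$ that guides the insertion.
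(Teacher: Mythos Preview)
The paper does not prove this theorem; it is quoted from \cite{HKP2016} as background for the notion of a semi-transitive orientation, so there is no in-paper argument to compare your proposal against.

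On the substance of your plan: the forward direction (orient each edge by first occurrence in $W$, then rule out shortcuts) is the standard argument and is essentially correct. Your reverse direction, however, has a genuine gap. The induction hypothesis hands you only \emph{some} word $w'$ representing $G-v$, with no control over how the letters are arranged; there is no a~priori reason an arbitrary such $w'$ admits positions at which copies of $v$ can be inserted so that $v$ alternates with every vertex of $N(v)$ and with no other vertex. Semi-transitivity is a constraint on the orientation $\phi$, not on the particular word the induction produced, so knowing that $N(v)$ carries a transitive order under $\phi$ does not by itself create compatible insertion slots inside $w'$. The argument in \cite{HKP2016} sidesteps this difficulty: rather than induct, it fixes a topological ordering of the semi-transitive orientation and explicitly writes down a representing word as a concatenation of carefully chosen permutations of $V(G)$. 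That construction is global, so the simultaneous-compatibility issue you flag as ``the technical core'' is handled all at once rather than one reinserted vertex at a time.
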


\subsection{Graph operations and semi-transitive orientations}
Studying semi-transitive orientations is evidently related to understanding graphs that have a semi-transitive orientation.
One natural approach is to investigate how certain graph operations affect a graph with a semi-transitive orientation.
In this viewpoint, it is natural to ask the following question for each well-known graph operation.

\begin{ques}\label{que1}
Does a given graph operation preserve semi-transitive orientability?
\end{ques}

The semi-transitive orientability of graphs is {\em hereditary} in the sense that if a graph $G$ is semi-transitively orientable, then every induced subgraph of $G$ is also semi-transitively orientable.
Therefore, Question~\ref{que1} is answered in the affirmative for vertex deletions. 
For completion, we give a short proof in terms of semi-transitive orientations.

\begin{prop}\label{vert-del}
For a semi-transitively orientable graph $G$ and a vertex $v\in V(G)$, the graph $G -v$ is also semi-transitively orientable.
\end{prop}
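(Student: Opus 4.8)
The plan is to show that the given semi-transitive orientation of $G$ \emph{restricts} to a semi-transitive orientation of $G-v$, so that no new orientation need be constructed from scratch. First I would fix a semi-transitive orientation $\phi$ of $G$, which exists by hypothesis, and let $\phi'$ denote the orientation of $G-v$ obtained by keeping the direction that $\phi$ assigns to every edge of $G-v$ (equivalently, by discarding all edges incident to $v$). It then suffices to verify the two defining requirements of semi-transitivity for $\phi'$: that it is acyclic and that it contains no shortcut.

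Acyclicity is immediate. Since $\phi'(G-v)$ is a subdigraph of $\phi(G)$, any directed cycle in $\phi'(G-v)$ would also be a directed cycle in $\phi(G)$, contradicting the fact that a semi-transitive orientation is acyclic.

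The key step is to rule out shortcuts. Suppose toward a contradiction that $\phi'(G-v)$ contains a shortcut on a vertex set $S \subseteq V(G)\setminus\{v\}$. The crucial observation is that a shortcut is defined entirely in terms of an induced subdigraph, and since $v \notin S$, no edge with both endpoints in $S$ is incident to $v$; hence the induced subdigraph of $\phi'(G-v)$ on $S$ coincides exactly with the induced subdigraph of $\phi(G)$ on $S$. Consequently the very same configuration would be a shortcut in $\phi(G)$, contradicting the semi-transitivity of $\phi$. Therefore $\phi'$ has no shortcut, and $G-v$ is semi-transitively orientable.

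I expect no serious obstacle here: the whole content is the single observation that induced subdigraphs on vertex sets avoiding $v$ are unchanged by deleting $v$, so that both the acyclicity condition and the shortcut-freeness condition transfer verbatim from $\phi$ to $\phi'$. The only point deserving a moment of care is making explicit that one is \emph{restricting} rather than \emph{re-orienting}, so that the inherited orientation is literally the old one on the surviving edges; this is precisely the sense in which the initial orientation carries over to $G-v$.
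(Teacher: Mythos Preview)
Your proposal is correct and is essentially the same as the paper's own proof: both restrict the given semi-transitive orientation $\phi$ to $G-v$ and observe that any shortcut in $\phi(G-v)$ would already be a shortcut in $\phi(G)$, a contradiction. You spell out the acyclicity step and the ``induced subdigraph on $S$ is unchanged'' observation a bit more explicitly than the paper does, but the argument is identical in substance.
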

\begin{proof}
Let $\phi$ be a semi-transitive orientation of $G$.
Suppose to the contrary that there is a vertex $v\in V(G)$ such that $G-v$ is not semi-transitively orientable.
Since $\phi(G-v)$ is not a semi-transitive orientation, and it clearly contains no directed cycles, there must be a shortcut $T$ in $\phi(G-v)$.
Yet, this shortcut $T$ is also a shortcut in $\phi(G)$, which contradicts the assumption that $\phi$ is a semi-transitive orientation of $G$. 
Hence, if $G-v$ is not semi-transitively orientable, then $G$ is also not semi-transitively orientable.
\end{proof}

There are many results regarding various graph operations on word-representable graphs.
Since Theorem~\ref{stoiffwr} reveals the relation between word-representable graphs and semi-transitive orientations, some of these results give a direct answer to Question~\ref{que1} for certain graph operations.
For instance, connecting two graphs by an edge, gluing two graphs at a vertex, and taking the Cartesian product of two graphs always preserve the semi-transitive orientability~\cite{Kit2017}.
On the other hand, even the most basic graph operations such as edge-deletion, edge-addition, and edge-contraction do not necessarily preserve this property.
We give simple examples here by starting with a graph that is semi-transitively orientable, and repetitively applying a certain graph operation to end up with $W_5$, which is not semi-transitively orientable as stated in Theorem~\ref{thm:6vx}.
Note that complete graphs, empty graphs, and bipartite graphs are semi-transitively orientable; see~\cite{Kit2017} for a proof. 

\begin{itemize}
\item {\bf Edge-deletion}\\
Start with the complete graph on $6$ vertices, and delete the appropriate edges to end up with $W_5$.

\item {\bf Edge-addition}\\
Start with the empty graph on $6$ vertices, and add the appropriate edges to end up with $W_5$.

\item {\bf Edge-contraction}\\
Start with the bipartite graph obtained by replacing each edge in $W_5$ with a path of length $2$, and contract each edge with an endpoint of degree $2$ to end up with $W_5$.
\end{itemize}

We refer the reader to~\cite{Kit2017, KL2015} for more results regarding graph operations.

\subsection{Our contributions}

As explained in the previous subsection, the answer to Question~\ref{que1} is known for various graph operations, and unfortunately many graph operations yield a negative answer.
In~\cite{KL2015}, it was explicitly asked if certain graph products preserve the semi-transitive orientability.
In particular, the tensor product and lexicographical product were mentioned.
In Section~\ref{sec:prod}, we answer the question in the negative for the tensor product, lexicographic product, and strong product by providing explicit examples.  
We will define the graph products in Section~\ref{sec:prod}. 

We modify Question~\ref{que1} slightly, and instead of asking if a certain graph operation always preserves the semi-transitive orientability, we ask if there exists an element of the graph such that the graph operation can be executed on while preserving the semi-transitive orientability. 
We state this as Question~\ref{que-main2}.

\begin{ques}\label{que-main2}
Given a graph operation and a semi-transitively orientable graph $G$, can the graph operation be executed while preserving the semi-transitive orientability?
\end{ques}

The main results of this paper is an affirmative answer to Question~\ref{que-main2} for edge-deletions, edge-additions, and edge-liftings. 
We were able to show that if a semi-transitively orientable graph has an edge $e$ such that the given graph operation is possible, then the graph has an edge (not necessarily $e$) such that the graph operation can be executed on while preserving the semi-transitive orientability. 
Moreover, for all three graph operations, we show that the initial semi-transitive orientation can be extended to the new graph obtained by the graph operation. 

\begin{theorem}\label{toempty}
If $\phi$ is a semi-transitive orientation of a semi-transitively orientable graph $G$ that is not the empty graph, then there exists an edge $e$ such that $\phi(G\setminus e)$ is a semi-transitive orientation of $G\setminus e$.
\end{theorem}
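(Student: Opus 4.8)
The plan is to use the fact that deleting an edge cannot create a directed cycle, so $\phi(G\setminus e)$ fails to be semi-transitive only if deleting $e$ \emph{creates} a shortcut. First I would pin down exactly when this happens. Suppose $S=\{v_1,\dots,v_n\}$ is a shortcut of $\phi(G\setminus e)$ that was not a shortcut of $\phi(G)$; since the induced subdigraph on $S$ changes only when both endpoints of $e$ lie in $S$, we must have $e\subseteq S$. As $S$ still carries the directed path $v_1\to\cdots\to v_n$ and the edge $v_1\to v_n$ in $\phi(G)$, and $\phi(G)$ has no shortcut, the subdigraph of $\phi(G)$ induced on $S$ must be transitive; a transitive digraph with a Hamiltonian directed path is a transitive tournament, so $G[S]$ is a clique whose unique topological order is $v_1\prec\cdots\prec v_n$. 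Writing $e=v_iv_j$ with $i<j$, the path survives only if $e$ is not one of its edges, i.e. $j\ge i+2$, and the edge $v_1\to v_n$ survives only if $(i,j)\neq(1,n)$. Thus deleting $e$ can create a shortcut only if $e$ sits inside some clique $S$ of size at least $4$ as a \emph{non-consecutive} edge ($j\ge i+2$) whose endpoints are \emph{not the extreme pair} ($\{v_i,v_j\}\neq\{v_1,v_n\}$). Call such an edge \emph{unsafe}; it then suffices to exhibit one safe edge.

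The construction is to pick an edge that is the extreme (source-to-sink) edge of every clique containing it, which by the above is automatically safe. Fix a topological order $v_1\prec\cdots\prec v_m$ of $\phi(G)$ and let $s$ be the $\prec$-smallest vertex incident with an edge. Every vertex $\prec$-before $s$ is isolated, so all neighbors of $s$ come after it; hence $s$ has in-degree $0$ and out-degree at least $1$. Let $t$ be the $\prec$-largest out-neighbor of $s$ and set $e=st$. I would then verify that $\{s,t\}$ is the extreme pair of every clique $S$ containing both: since $s$ is a source, $s=\min S$ in the order of $S$; and since every vertex of $S\setminus\{s\}$ is an out-neighbor of $s$, it is $\prec$-at most $t$, so $t=\max S$. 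By the characterization above, $e$ cannot be unsafe, hence $\phi(G\setminus e)$ contains no shortcut and, being acyclic, is semi-transitive.

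The main obstacle is the first step: showing that any shortcut created by an edge deletion forces the original induced subdigraph on that vertex set to be a clique. The crux is that deleting an edge can only \emph{destroy} transitivity, so the surviving path together with the surviving long edge already pins down a complete transitive tournament in $\phi(G)$. Once this rigidity is in hand, choosing $e$ as the edge from a source to its $\prec$-maximal out-neighbor makes the extreme-pair property immediate, and the remainder is bookkeeping.
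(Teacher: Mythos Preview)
Your proof is correct and follows essentially the same strategy as the paper: both observe that any shortcut created by deleting $e$ forces $G[S]$ to have been a transitive tournament in which $e$ is not the source--to--sink edge, and both then exhibit an edge that is extreme in every clique containing it by anchoring one endpoint at a global source/sink of $\phi(G)$. The paper works dually, taking a sink $x_n$ of $\phi(G)$, a \emph{maximal} clique $K$ through $x_n$, and the source $x_1$ of $\phi(K)$; it then invokes the maximality of $K$ together with semi-transitivity to argue that $x_1$ must be the source of every clique containing $x_1$ and $x_n$. Your choice (a source $s$ and its $\prec$-largest out-neighbor $t$) reaches the same conclusion by a more direct count: every vertex of a clique through $s$ and $t$ other than $s$ is an out-neighbor of $s$ and hence $\preceq t$, so $t$ is automatically the sink. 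Thus the two arguments coincide in outline, with your verification step slightly cleaner since it bypasses the maximal-clique lemma.
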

\begin{theorem}\label{tocomplete}
If $\phi$ is a semi-transitive orientation of a semi-transitively orientable graph $G$ that is not the complete graph, then there exist two non-adjacent vertices $u$ and $v$ such that $\phi$ can be extended to a semi-transitive orientation of $G+uv$. 
\end{theorem}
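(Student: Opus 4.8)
The plan is to fix a linear extension $\prec$ of the acyclic digraph $\phi(G)$ and to always orient a newly added edge forward along $\prec$; then acyclicity of the extended orientation is automatic, and the only thing to check is that no new shortcut is created. I would first reduce to \emph{minimal} shortcuts: an acyclic, non-transitive induced subdigraph carrying a spanning directed path and the edge from its source to its sink must, when inclusion-minimal, be an induced directed path $p_0\to p_1\to\cdots\to p_r$ with $r\ge 3$ together with the single extra edge $p_0\to p_r$ (any chord would expose a smaller such configuration). Hence, after adding $u\to v$, a newly created shortcut is exactly such a path-plus-long-edge in which $u\to v$ is either the long edge $p_0\to p_r$ or one of the path edges $p_l\to p_{l+1}$; all non-adjacencies forced by "induced'' already hold in $G$, since $u\to v$ is the only new edge.

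For the choice of pair I would let $v$ be the $\prec$-largest vertex having a non-neighbor that precedes it (such $v$ exists because $G$ is not complete), and let $u$ be the $\prec$-largest non-neighbor of $v$ with $u\prec v$; then I add $u\to v$. This is engineered to eliminate almost every configuration. If in a would-be minimal shortcut the top vertex $p_r$ is not $v$, then $p_r\succ v$, so by maximality of $v$ the vertex $p_r$ is adjacent to every earlier vertex; in particular $p_{r-2}\sim p_r$, contradicting chordlessness (the two lie at distance $\ge 2$ on the induced path and $p_{r-2}\ne p_0$ since $r\ge 3$). Thus $v=p_r$. In that case either $u=p_0$, so $u\to v$ is the long edge of an induced path $u\to p_1\to\cdots\to p_{r-1}\to v$; but then $p_{r-2}$ is a non-neighbor of $v$ lying strictly between $u$ and $v$ in $\prec$, contradicting maximality of $u$ among non-neighbors of $v$ below $v$. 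The only surviving possibility is $u=p_{r-1}$, i.e.\ $u\to v$ is the \emph{last} edge of the path, accompanied by a genuine edge $p_0\to v$ of $G$.

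The main obstacle is precisely this last-path-edge case, where neither extremal property bites (and it genuinely occurs, e.g.\ for suitable orientations of $P_4$, where $u\to v$ must be avoided but another non-edge is still addable). To dispose of it I would pass to a global minimal counterexample: assuming that \emph{every} non-edge fails, choose a non-edge whose smallest forced shortcut has the fewest vertices. In that smallest shortcut any proper sub-path of length $\ge 3$ would be a strictly smaller forbidden long-edge configuration for its own endpoints, so minimality collapses the shortcut to a bounded size (in effect the four-vertex residual $p_0\to p_1\to u\to v$ with $p_0\to v$); a direct inspection of the few remaining small configurations then exhibits one of their non-edges (such as $\{p_0,u\}$) that can in fact be added, contradicting the assumption. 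I expect controlling exactly this first/last-edge boundary to be the hard part, and the natural tool is a symmetric choice run from the sink side, mirroring the source-side selection above in the same spirit as the companion edge-deletion statement.
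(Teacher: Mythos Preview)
Your extremal choice of $(u,v)$ can genuinely fail. Take $V(G)=\{a,b,c,d\}$ with directed edges $a\to b$, $b\to c$, $a\to d$ and no others; this $\phi$ is semi-transitive, and the linear extension is $a\prec b\prec c\prec d$. Your rule selects $v=d$ and $u=c$, but adding $c\to d$ creates the shortcut $a\to b\to c\to d$ together with the existing edge $a\to d$. This is exactly your ``last-path-edge'' scenario, and here it is fatal for that pair (even though $a\to c$ or $b\to d$ would have worked).

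Your fallback is not yet a proof. Once you abandon the extremal pair and instead pick a non-edge $e^*$ minimising the size of its forced shortcut, you lose the control that pinned $v=p_r$ and $u\in\{p_0,p_{r-1}\}$, so you cannot assume $e^*$ sits at the end of the path. The sub-path reduction you sketch requires the sub-path to lie entirely in $G$, which fails when $e^*$ is an interior path edge. And even after collapsing to the four-vertex residual $p_0\to p_1\to u\to v$ with $p_0v\in E(G)$ and $p_0u,\,p_1v\notin E(G)$, you still owe an argument that one of those two non-edges can be added without creating a shortcut \emph{elsewhere in $G$}; that is the original problem again, not a finite inspection. The symmetric sink-side run faces the mirror obstruction, and you give no reason the two cannot occur simultaneously.

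The paper avoids all of this with a different extremal principle. When $\phi$ is not transitive it chooses non-adjacent $u,w$ admitting a \emph{middleman} $v$ with $u\to v\to w$ in $\phi(G)$, and among such triples it minimises the gap between the levels of $u$ and $w$ in the layering $V_1,\dots,V_m$ obtained by iteratively stripping sources. The middleman is precisely what your argument lacks: if the added edge $u\to w$ lies on the spanning path of a shortcut, one substitutes $u\to v\to w$ to exhibit a shortcut already present in $\phi(G)$. When $u\to w$ is the long edge, the minimum-gap condition forces the shortcut to be complete. When $\phi$ is already transitive (so no middleman exists), two further short lemmas for comparability graphs finish the job.
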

\begin{theorem}\label{tomatching}
If $\phi$ is a semi-transitive orientation of a semi-transitively orientable graph $G$ with maximum degree at least $2$, then there exists a path $P$ of length $2$ such that $\phi$ can be extended to a semi-transitive orientation of the graph obtained from $G$ by lifting $P$.
\end{theorem}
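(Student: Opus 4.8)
The plan is to retain $\phi$ on every surviving edge and to orient the single new edge forward along the topological order $\prec$ determined by $\phi$ (so $a\to b$ in $\phi(G)$ only when $a\prec b$). Writing the lifted path as $u-w-v$ with $u\prec v$ and orienting the new edge $u\to v$, the resulting orientation $\phi'$ of $G'=G-uw-wv+uv$ still respects $\prec$ and is therefore acyclic. Thus acyclicity is free, and the whole task is to choose the path so that $\phi'$ has no shortcut. I will also exploit the reversal symmetry of the problem: reversing every edge sends a directed cycle to a directed cycle and a shortcut to a shortcut (transitivity is self-dual under reversal), so it preserves semi-transitivity and commutes with lifting; this lets me interchange the roles of sources and sinks, and of in- and out-neighborhoods, at will.

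Comparing $\phi'[S]$ with $\phi[S]$ on a vertex set $S$ shows that a shortcut of $\phi'$ can arise in only two ways: (A) $w\notin S$ while $u,v\in S$, so that inserting the forward edge $u\to v$ into the shortcut-free digraph $\phi[S]$ creates a shortcut; or (B) $w\in S$, so that deleting $\phi$'s edges at $w$ turns a shortcut-free induced subdigraph into a shortcut. For the principal subcase of (B), observe that a set $S$ can become a shortcut through deletion only if $\phi[S]$ already carried a Hamiltonian directed path together with the long edge $v_1\to v_n$ and was transitive—hence a transitive tournament on a clique of $G$—and the deleted edge was a non-consecutive chord whose removal breaks transitivity while leaving the Hamiltonian path intact. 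The key idea I will use is that if the lifted neighbor of $w$ is chosen $\prec$-closest to $w$ on its side—say $v$ is the $\prec$-smallest out-neighbor of $w$—then no neighbor of $w$ lies strictly $\prec$-between $w$ and $v$, so in any clique containing $w$ and $v$ the edge $w\to v$ is the consecutive Hamiltonian-path edge; deleting it destroys the Hamiltonian path and so cannot seed a shortcut. This is why the lifted path should be built from the two $\prec$-extremal neighbors of $w$ nearest to it, and it disposes of case (B).

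The main obstacle is case (A), which I plan to attack by a structural dichotomy followed by minimality. If every vertex has at most one in-neighbor and at most one out-neighbor, then $\phi(G)$ is a disjoint union of directed paths, and lifting the middle of any directed $u\to w\to v$ to the forward edge $u\to v$ visibly leaves a union of directed paths, which is shortcut-free; so, using reversal symmetry, we may assume some vertex $w$ has at least two out-neighbors and lift the cherry on its two $\prec$-smallest out-neighbors $u,v$ (which can be taken non-adjacent, else one descends to a smaller configuration). It then remains to exclude a shortcut of type (A) through the new edge $u\to v$, which is either the long edge of the shortcut—forcing an old directed $u$–$v$ path on at least four vertices—or an interior edge of its spanning path—forcing an in-edge at $u$ and an out-edge at $v$ inside $S$. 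I intend to rule both out by choosing, among all admissible cherries, one minimizing the $\prec$-span of $\{u,v\}$, and then showing that a surviving shortcut would either relocate a directed $u$–$v$ path off $w$ which, together with the transitivity forced on the local clique structure by Theorem~\ref{nb}, is already a shortcut of $\phi$, or would expose an even shorter admissible cherry, contradicting minimality.

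I expect the interior-edge sub-case of (A) to be the delicate point: there the new edge lies strictly inside a long directed path and neither endpoint is forced to be a source or a sink, so the contradiction must be extracted from the local comparability structure at $w$ supplied by Theorem~\ref{nb} together with the $\prec$-consecutiveness already imposed to settle (B). By comparison, the union-of-paths base case, the reversal reduction, and the verification that the two $\prec$-smallest out-neighbors of $w$ may be taken non-adjacent are routine.
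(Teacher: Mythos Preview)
Your plan diverges from the paper at the very first structural choice, and that choice is where it breaks. You lift a \emph{cherry} $u\!-\!w\!-\!v$ in which both path–edges point the same way out of (or into) the centre $w$, whereas the paper lifts a \emph{directed} $2$–path $u\to v\to w$. The paper's choice is not cosmetic: the new edge $u\to w$ is witnessed by the old path $u\to v\to w$, so any putative shortcut through the new edge can be rerouted through $v$ and pushed back into $\phi(G)$. Your cherry provides no such witness for the new edge $u\to v$, and your proposed cure for case~(A)---minimising the $\prec$-span of $\{u,v\}$ and appealing to Theorem~\ref{nb}---does not work.

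Here is a concrete failure. Take $G=C_5$ on $a,u,w,v,b$ (cycle $a\!-\!u\!-\!w\!-\!v\!-\!b\!-\!a$) with the semi-transitive orientation
\[
a\to u,\quad w\to u,\quad w\to v,\quad v\to b,\quad a\to b,
\]
and linear extension $a\prec w\prec u\prec v\prec b$. The only out-cherry is at $w$ with leaves $u,v$ (span $1$); the only in-cherry of the same minimal span is at $u$ with leaves $a,w$. Lifting $u\!-\!w\!-\!v$ and adding $u\to v$ yields the shortcut $a\to u\to v\to b$ with $a\to b$; lifting $a\!-\!u\!-\!w$ and adding $a\to w$ yields the shortcut $a\to w\to v\to b$ with $a\to b$. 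Both minimal-span cherries fail, and in each case the new edge is an \emph{interior} edge of the shortcut---exactly the sub-case you flagged as ``delicate''. No smaller admissible cherry exists, and Theorem~\ref{nb} is vacuous here since $N(w)=\{u,v\}$ is edgeless. (By contrast, the paper's level partition has $m=3$, selects the directed $2$-path $w\to v\to b$, and the lift succeeds.)

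So the gap is not a missing detail but a wrong mechanism: cherry-lifting with forward orientation of the new edge and $\prec$-span minimisation does not avoid shortcuts in case~(A). To repair the argument along the paper's lines, switch to lifting a \emph{directed} $2$-path, choose the endpoints non-adjacent, and minimise their level difference; the witness $u\to v\to w$ then disposes of the ``interior edge'' sub-case immediately, and minimality collapses the ``long edge'' sub-case to a $2$-path, which cannot seed a shortcut.
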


We also push the investigation further and ask the following question that initiates the study of sufficient conditions that guarantee a certain graph operation to preserve the semi-transitive orientability.

\begin{ques}\label{que-main}
When does a graph operation preserve the semi-transitive orientability?
\end{ques}

We were able to obtain results regarding Question~\ref{que-main} for edge-deletions, edge-subdivisions, and edge-additions. 
Given an edge $e$ in a graph $G$, we find an interesting relation between the graph obtained by deleting $e$ and subdividing $e$. 
We also give two sufficient conditions on edges so that edge-deletions and edge-subdivisions on those edges always preserve the semi-transitive orientability.

\begin{theorem}\label{ediffes}
Let $e$ be an edge of a graph $G$.
The graph obtained from $G$ by subdividing $e$ at least once is semi-transitively orientable if and only if $G\setminus e$ is semi-transitively orientable.
\end{theorem}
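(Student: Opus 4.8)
The plan is to prove both implications, disposing of one direction by heredity and concentrating all effort on constructing a good orientation for the converse. Write $e=uv$, let $H=G\setminus e$, and for $k\ge 1$ let $G_k$ be the graph obtained from $G$ by replacing $e$ with a path $u,w_1,\dots,w_k,v$ through $k$ new vertices. Since $H=G_k-\{w_1,\dots,w_k\}$ is an induced subgraph of $G_k$, the implication ``$G_k$ is semi-transitively orientable $\Rightarrow$ $H$ is semi-transitively orientable'' is immediate from Proposition~\ref{vert-del} applied $k$ times. So the real content is the converse: starting from a semi-transitive orientation $\phi$ of $H$, I want to orient the new path so as to obtain a semi-transitive orientation of $G_k$.

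First I would record the local fact about the new vertices that drives everything. In any orientation of the new path, the source $x_1$ of a shortcut's Hamiltonian path has out-degree at least $2$ and its sink $x_n$ has in-degree at least $2$ (the two neighbors are distinct because $n\ge 4$). Hence a vertex of out-degree $1$ cannot be $x_1$, a vertex of in-degree $1$ cannot be $x_n$, and a vertex of out-degree or in-degree $0$ can play only the corresponding extreme role. This yields a forcing principle: if a ``through'' vertex $w_i$ (in- and out-degree $1$) lies in a shortcut, it must be interior on that shortcut's Hamiltonian path, so both of its path-neighbors lie in the shortcut and are its predecessor and successor there. Iterating, the presence of \emph{any} $w_i$ in a shortcut drags the whole new path, together with $u$ and $v$, into the shortcut as a contiguous directed segment of its Hamiltonian path.

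The orientation I choose depends on whether $u$ and $v$ are comparable in the acyclic order of $\phi(H)$. If there is a directed path from $u$ to $v$ in $\phi(H)$ (the reverse case being symmetric), I orient the new path as $u\to w_1\to\cdots\to w_k\to v$; this stays acyclic since $\phi(H)$ has no directed $v$-to-$u$ path. If $u$ and $v$ are incomparable, I instead orient the new path with an interior sink, say $u\to w_1$ together with $v\to w_k\to\cdots\to w_2\to w_1$, so that no directed $u$--$v$ path is created and acyclicity is preserved. (The incomparable case genuinely needs this: a directed orientation there can create a shortcut, as a small example shows.)

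The hard part will be ruling out shortcuts in the comparable case, and for this I would first prove a short auxiliary lemma directly from the definition: in a semi-transitive orientation, if $a\to b$ is an edge and some directed path on at least four vertices runs from $a$ to $b$, then the vertices of that path induce a complete subgraph (a transitive tournament), since otherwise that induced subdigraph would be a shortcut. Now suppose $G_k$ had a shortcut. By the forcing principle $u$ and $v$ sit on its Hamiltonian path, and concatenating the prefix $x_1\to\cdots\to u$, a directed $u$--$v$ path inside $\phi(H)$, and the suffix $v\to\cdots\to x_n$ gives a directed path from the source $x_1$ to the sink $x_n$ lying entirely in $H$; this is a simple path because $\phi(H)$ is acyclic, and it has at least four vertices because the shortcut edge $x_1\to x_n$ rules out the degenerate identification $x_1\to x_n=u\to v$ (that edge is absent from $G_k$). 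Together with the edge $x_1\to x_n$ the auxiliary lemma forces these vertices to induce a complete subgraph, whence $uv\in E(H)$ — contradicting that $e$ was deleted. In the incomparable case I instead use the interior sink directly: the forcing principle again pulls the whole path into any shortcut, but then the sink $w_1$ can only be $x_n$, and reading off the shortcut's Hamiltonian path exhibits a directed $u$--$v$ path inside $\phi(H)$, contradicting incomparability. In both cases no shortcut survives, completing the converse.
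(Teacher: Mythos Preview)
Your argument is correct, and the route you take is genuinely different from the paper's. The paper never tests whether $u$ and $v$ are comparable in $\phi(H)$. Instead it \emph{always} starts with the orientation $u\leftarrow w_1\to w_2\to\cdots\to w_k\to v$ (one reversed first arc), observes that this already succeeds unless a shortcut of a very specific shape appears, and then reorients the whole new path monotonically in whichever direction that shortcut dictates; the verification that the reoriented path works is done by an ad hoc analysis showing that a second obstruction would combine with the first to produce a cycle or a shortcut inside $\phi(H)$. Your approach front-loads the case split (comparable versus incomparable) and then uses two clean structural facts --- the ``forcing principle'' for degree-$2$ subdivision vertices and the auxiliary lemma that a semi-transitive orientation makes the vertex set of any long $a$--$b$ directed path a clique whenever $a\to b$ is an edge --- to kill all shortcuts at once. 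This is arguably more transparent, and the two auxiliary facts are reusable; the paper's proof is shorter to state but hides the same work inside its case analysis of the paths $Q$ and $Q'$.

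Two small points worth tightening in your write-up. First, in the comparable case you should say explicitly that a shortcut in $G_k$ must contain some $w_i$ (otherwise it already lies in $\phi(H)$) before invoking the forcing principle. Second, in the incomparable case your one-line conclusion ``reading off the shortcut's Hamiltonian path exhibits a directed $u$--$v$ path inside $\phi(H)$'' deserves one more sentence: since $w_1=x_n$, the shortcut arc $x_1\to w_1$ forces $x_1\in\{u,w_2\}$ (or $x_1\in\{u,v\}$ when $k=1$); the option $x_1=w_2$ is impossible because $w_2$ has out-degree~$1$, and the remaining option yields a Hamiltonian path beginning at $u$ (or $v$) that must pass through $v$ (or $u$) before entering the subdivision vertices, giving the promised directed path in $\phi(H)$.
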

\begin{theorem}\label{k4free}
Let $e$ be an edge of a semi-transitively orientable graph $G$. 
If there is no $K_4$-subgraph of $G$ that contains $e$, then deleting $e$ or subdividing $e$ preserves the semi-transitive orientability of $G$.
\end{theorem}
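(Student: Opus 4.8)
The plan is to reduce both assertions to a single statement about edge-deletion. By Theorem~\ref{ediffes}, the graph obtained from $G$ by subdividing $e$ is semi-transitively orientable if and only if $G\setminus e$ is; hence it suffices to prove that $G\setminus e$ is semi-transitively orientable, and the conclusion for subdivision follows at once.

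To handle edge-deletion I would not construct a new orientation but reuse the given one. Let $\phi$ be a semi-transitive orientation of $G$, with $e$ oriented as $u\to v$. Forgetting $e$ leaves an acyclic orientation $\phi(G\setminus e)$, so it is semi-transitive unless it contains a shortcut. Suppose for contradiction that there is a shortcut $S$ on $W=\{w_1,\dots,w_n\}$, with shortcut edge $w_1\to w_n$ and directed path $w_1\to\cdots\to w_n$ (so $n\ge 4$), and compare $S$ with the subdigraph $S'$ that $\phi(G)$ induces on $W$. Deleting $e$ removes only the arc $uv$, so either $S'=S$ (when $W$ fails to contain both $u$ and $v$) or $S'$ is $S$ with the single extra arc $u\to v$. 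In both cases $S'$ still contains the arc $w_1\to w_n$ and the path $w_1\to\cdots\to w_n$ (note $w_1 w_n\ne e$, since $w_1\to w_n$ survives in $G\setminus e$), so $S'$ meets the edge-and-path requirement of a shortcut.

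Therefore $S'$ fails to be a shortcut of $\phi(G)$ only if $S'$ is transitive. If $S'$ is not transitive it is a shortcut in $\phi(G)$, contradicting semi-transitivity of $\phi$; so we may assume $S'$ is transitive. In particular $S'\ne S$ (a shortcut is non-transitive), so both $u$ and $v$ lie in $W$. Now comes the crucial observation: a transitive digraph containing the path $w_1\to\cdots\to w_n$ must contain $w_i\to w_j$ for every $i<j$, so $S'$ is the complete transitive tournament on $W$, whence $G[W]=K_n$ with $n\ge 4$. Since $u,v\in W$ and $|W|\ge 4$, choosing any two further vertices $x,y\in W\setminus\{u,v\}$ exhibits a copy of $K_4$ in $G$ on $\{u,v,x,y\}$ that contains $e$, contradicting the hypothesis. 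Hence $\phi(G\setminus e)$ has no shortcut and is a semi-transitive orientation of $G\setminus e$.

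I expect the transitive case to be the conceptual heart of the argument: the point is that the only way deleting $e$ can create a shortcut is by destroying transitivity on a set that $\phi(G)$ already orients as a complete transitive tournament, and such a set of size at least $4$ passing through $e$ is precisely a forbidden $K_4$ containing $e$. The surrounding steps — acyclicity of $\phi(G\setminus e)$, the identification $S'=S$ when $W$ omits an endpoint of $e$, and the fact that the shortcut edge is not $e$ — are routine verifications.
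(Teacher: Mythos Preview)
Your proposal is correct and follows essentially the same approach as the paper: reduce subdivision to deletion via Theorem~\ref{ediffes}, then argue that any shortcut $S$ in $\phi(G\setminus e)$ forces $\phi(G)[V(S)]$ to be transitive (else it would itself be a shortcut), hence a complete tournament on at least four vertices containing both endpoints of $e$, yielding the forbidden $K_4$. Your write-up is somewhat more explicit about the case split $S'=S$ versus $S'=S\cup\{u\to v\}$ and about why $w_1w_n\neq e$, but the underlying argument is identical to the paper's Theorem~\ref{k4del}.
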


We show an intriguing result that edge-additions are not guaranteed to preserve the semi-transitive orientability even if we have the additional condition that the graph has no short odd cycle.

\begin{theorem}\label{oddcycle}
For each positive integer $k$, there exists a semi-transitively orientable graph $G$ with no odd cycle of length at most $2k+1$ such that $G$ has a pair of non-adjacent vertices $u$ and $v$ where $G + uv$ is not semi-transitively orientable.
\end{theorem}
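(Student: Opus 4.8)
The plan is, for each fixed $k$, to exhibit an explicit graph $G$ of odd girth exceeding $2k+1$ together with a non-edge $uv$, and to prove two things: that $G$ is semi-transitively orientable, and that $G+uv$ is not. It is convenient to think of the target as $\Gamma=G+uv$: I want a non--semi-transitively orientable graph $\Gamma$ in which the edge $uv$ is \emph{critical}, meaning $\Gamma\setminus uv$ is semi-transitively orientable, and in which every odd cycle of length at most $2k+1$ passes through $uv$. The second requirement is exactly what guarantees that $G=\Gamma\setminus uv$ has no short odd cycle; phrased this way, the short odd cycles of $\Gamma$ must all be ``concentrated'' on the single edge $uv$.

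Proving that $G$ is semi-transitively orientable should be the routine direction: $G$ will be built to be sparse and nearly bipartite, so I expect either to write down an explicit semi-transitive orientation or to invoke Theorem~\ref{k4free} (deleting or subdividing a $K_4$-free edge is safe) together with the fact that bipartite graphs are orientable. The genuine difficulty is the non-orientability of $\Gamma$, and here the standard certificate, the neighbourhood criterion of Theorem~\ref{nb}, is \emph{unavailable}. Indeed, if some vertex had an induced odd cycle in its neighbourhood it would be the hub of an odd wheel and would therefore lie in many triangles; those triangles are short odd cycles that could not all pass through the single edge $uv$. Hence $\Gamma$ cannot contain an induced odd wheel, and more generally its failure of semi-transitivity cannot be local: I must produce a graph whose non-orientability is \emph{global}.

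The construction I would attempt places $u$ and $v$ at the ends of several internally disjoint induced paths, together with the chord $uv$, with the path lengths tuned by parity so that in any acyclic orientation the chord is forced to lie across a monotone induced path of length at least three, thereby creating a shortcut. Making the paths long renders the odd girth of $G=\Gamma\setminus uv$ as large as required. Moreover, by the iterated form of Theorem~\ref{ediffes} the orientability of a subdivided graph depends only on \emph{which} edges are subdivided and not on \emph{how many} times, so once an edge is being subdivided at all I may freely lengthen it, in even increments, to tune cycle lengths and parities without affecting orientability; this is what lets a single pattern produce the whole family indexed by $k$, and it is the device I would use to reduce the orientability bookkeeping to that of a small or bipartite remainder.

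The step I expect to be the main obstacle is proving that the shortcut is genuinely unavoidable, i.e.\ that \emph{every} acyclic orientation of $\Gamma$ contains a shortcut. Because the obstruction is global rather than neighbourhood-based, this cannot be read off from a single vertex; it will require a careful analysis of how the various $u$--$v$ paths can be oriented, showing that the acyclicity constraints together with the direction chosen for $uv$ always force one induced path to be a monotone directed path over which $uv$ is a non-transitive chord. Arranging the path lengths and their parities so that this forcing holds \emph{simultaneously} with the two structural demands---that every short odd cycle runs through $uv$, and that $\Gamma\setminus uv$ remains semi-transitively orientable---is the delicate core of the argument, and is where I expect the real work to lie.
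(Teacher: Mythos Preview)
Your proposed construction cannot work. A generalized theta graph with the chord $uv$ added---that is, $u$, $v$, the edge $uv$, and several internally disjoint $u$--$v$ paths---is \emph{always} semi-transitively orientable, regardless of the path lengths. Orient $u\to v$, and on each path $u,w_1,\dots,w_{\ell-1},v$ orient $w_1\to u$ and $w_1\to w_2\to\cdots\to w_{\ell-1}\to v$. Then $v$ is a global sink and the only out-edge of $u$ is $u\to v$, so no directed path of length at least two starts at $u$; it is easy to check that no edge can serve as a shortcut. More conceptually, such a graph is $3$-colourable, and it is a theorem of Halld\'orsson, Kitaev and Pyatkin that every $3$-colourable graph is semi-transitively orientable. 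Hence no tuning of parities can force a shortcut: the obstruction you need is a chromatic phenomenon, and a near-bipartite gadget of paths cannot supply it.

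The paper's route is entirely different and non-constructive. It first proves the stronger Theorem~\ref{odd}: for each $k$ there exists a graph with no odd cycle of length at most $2k+1$ that is not semi-transitively orientable. Start from a graph $H$ with $\chi(H)\ge 4$ and girth at least $6k+4$, whose existence is Erd\H{o}s's Theorem~\ref{erdos}, and form $G$ by adding an edge between every pair of vertices at distance exactly~$3$ in $H$. A short counting argument shows $G$ still has no odd cycle of length $\le 2k+1$. The non-orientability comes from the Gallai--Hasse--Roy--Vitaver principle: because $\chi(H)\ge 4$, every acyclic orientation of $G$ restricted to $H$ contains a directed path $v_1\to v_2\to v_3\to v_4$; by construction $v_1v_4\in E(G)$, while the large girth of $H$ guarantees $v_1v_3\notin E(G)$, so $\{v_1,v_2,v_3,v_4\}$ is a shortcut. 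Theorem~\ref{oddcycle} then follows immediately by passing to an edge-minimal such $G$: removing any edge of an edge-minimal non-orientable graph yields a semi-transitively orientable graph with the same odd-girth bound, and the removed edge is the required $uv$. The key idea you are missing is precisely this link between chromatic number and forced long directed paths; without it there is no mechanism to make the shortcut unavoidable.
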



Section~\ref{sec2} concerns our results regarding edge-deletions and edge-subdivisions.
Our results related to edge-additions and edge-lifitings are in Section~\ref{sec3} and Section~\ref{sec:lift}, respectively. 
Section~\ref{sec:prod} is about graph products. 
We conclude this paper with Section~\ref{sec4} by presenting interesting open problems and suggesting future research directions.

\section{Edge-deletions and edge-subdivisions}\label{sec2}

We start this section with a proof of Theorem~\ref{ediffes}.
One direction can be easily shown by the following argument:
For an edge $e$ of a graph $G$, let $G_e$ be the graph obtained from $G$ by subdividing $e$ at least once, namely, $e$ is replaced by a path of length at least $2$. 
Suppose $G_e$ is semi-transitively orientable.
Now, $G\setminus e$ is also semi-transitively orientable by Proposition~\ref{vert-del} since $G\setminus e$ is exactly the induced subgraph of $G_e$ on $V(G)=V(G\setminus e)$.
Hence, it remains to prove the opposite direction.


\begin{theorem}
Let $e$ be an edge of a graph $G$.
If $G\setminus e$ is semi-transitively orientable, then the graph $G_e$ obtained from $G$ by subdividing $e$ at least once is also semi-transitively orientable.
\end{theorem}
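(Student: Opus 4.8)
The plan is to start from a semi-transitive orientation $\psi$ of $G\setminus e$ and use it to build a semi-transitive orientation of $G_e$. Write $e=uv$, so that in $G_e$ the edge $e$ is replaced by a path $u=w_0\to w_1\to\cdots\to w_k=v$ through newly added internal vertices $w_1,\dots,w_{k-1}$ of degree $2$. On the edges already present in $G\setminus e$ I would keep the orientation $\psi$ verbatim; the only freedom—and the only danger—lies in how I orient the subdivision path. Since $\psi$ is acyclic, it induces a topological-style partial structure on $V(G)$, and the natural first move is to orient the whole subdivision path consistently, say all arcs pointing from $u$ toward $v$ (after possibly relabeling so that $u$ precedes $v$ in a fixed acyclic order of $\psi(G\setminus e)$). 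This guarantees acyclicity of the new orientation $\phi$, since any directed cycle would have to use the path, but the path is a directed $u$-to-$v$ segment with all intermediate vertices of in-degree and out-degree $1$, so it cannot close up into a cycle without a directed $v$-to-$u$ walk inside $\psi(G\setminus e)$, which acyclicity of $\psi$ forbids.

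The heart of the argument is to verify that $\phi$ contains no shortcut. I would argue that any putative shortcut $S$ in $\phi(G_e)$ must interact with at least one of the new degree-$2$ vertices $w_i$, because if $S$ avoided all of them entirely it would be a shortcut lying inside $\psi(G\setminus e)$, contradicting semi-transitivity of $\psi$. So suppose $S$ uses some $w_i$. The key structural observation is that each $w_i$ has exactly two neighbors, namely $w_{i-1}$ and $w_{i+1}$ (interpreting $w_{-1}=u$-side and so on), and in $\phi$ these appear as the single arc $w_{i-1}\to w_i\to w_{i+1}$. A shortcut, by definition, is a non-transitive induced subdigraph containing both a long directed path $v_1\to\cdots\to v_n$ ($n\ge 4$) and the arc $v_1\to v_n$. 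I would examine where $w_i$ can sit in such a configuration: it cannot be an endpoint $v_1$ or $v_n$ of the chord $v_1\to v_n$, since $w_i$ is incident to no edge other than the two path arcs and the chord $v_1\to v_n$ is an edge of the shortcut distinct from consecutive path arcs; and if $w_i$ is an internal vertex of the path, its two path-neighbors are forced to be $w_{i-1}$ and $w_{i+1}$, so the shortcut would have to continue along the subdivision path in both directions.

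Pursuing this, I expect the main obstacle to be a clean case analysis showing that once a shortcut enters the subdivision path it is trapped: the path has no chords (its internal vertices have degree $2$), so the chord $v_1\to v_n$ of the shortcut cannot be one of the path arcs, yet every vertex of the path other than $u,v$ forces its neighbors to be the adjacent path vertices. This should let me collapse any shortcut through the $w_i$ into a shortcut of $\psi(G\setminus e)$ by contracting the subdivision path back to a single arc $u\to v$—or, more carefully, show that the chord endpoints $v_1,v_n$ together with the path force $u$ and $v$ to play the roles of the relevant boundary vertices, producing either a directed $u$-to-$v$ chord or a shortcut entirely within $\psi$. The delicate point, and where I would spend the most care, is handling the endpoints $u$ and $v$ themselves: they do carry other edges of $G\setminus e$, so a shortcut could conceivably run from some vertex of $G\setminus e$ into $u$, along part of the subdivision path, and back out at $v$. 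I would show that because the path segment between $u$ and $v$ contributes only a long induced directed path with no chords, the existence of the shortcut chord $v_1\to v_n$ forces a directed arc or short directed path between the corresponding vertices of $G\setminus e$, which then reconstitutes a shortcut in $\psi(G\setminus e)$ after removing the interior $w_i$'s, contradicting the choice of $\psi$. This contraction-of-the-path idea is the linchpin, and making it rigorous for all positions of $u,v$ within the hypothetical shortcut is the step I anticipate being the most technical.
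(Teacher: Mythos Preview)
Your plan has a genuine gap, not merely a technical one. Orienting the subdivision path monotonically as $u\to w_1\to\cdots\to w_{k-1}\to v$ can create a shortcut in $\phi(G_e)$ even when $\psi(G\setminus e)$ is perfectly semi-transitive, and your proposed ``collapse the path back'' manoeuvre will not recover a shortcut in $\psi$.

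Here is a concrete counterexample. Let $G$ be the triangle on $\{c,u,v\}$ and $e=uv$. Then $G\setminus e$ is the path $u\,c\,v$, with semi-transitive orientation $\psi$ given by $c\to u$ and $c\to v$. Subdivide $e$ once, introducing $w_1$. Your recipe orients $u\to w_1\to v$ (the other direction is symmetric). Now $c\to u\to w_1\to v$ is a directed path on four vertices, $c\to v$ is present as the chord, and both $cw_1$ and $uv$ are non-edges: this is a shortcut in $\phi(G_e)$. Yet $\psi(G\setminus e)$ has no directed path of length two at all, so there is no shortcut in $\psi$ to which this configuration could be ``contracted''. The linchpin step you anticipate as merely technical is in fact false for this orientation.

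The paper's proof sidesteps the problem by \emph{not} committing to a monotone orientation. It first orients the subdivision path as $x\leftarrow p_1\to p_2\to\cdots\to p_t\to y$, making $p_1$ a source of the whole digraph. Any shortcut through the new vertices is then forced to have $p_1$ as its source, which in turn forces a directed path from $y$ to $x$ (or, when $t=1$, from $x$ to $y$) already inside $\psi(G\setminus e)$. Only after detecting such a path does the proof switch to a monotone orientation of the subdivision path, and the previously found $y$-to-$x$ (respectively $x$-to-$y$) path is then used to show that the reoriented digraph has no shortcut. This two-stage choice is essential: in the triangle example above, neither monotone orientation works, while $u\leftarrow w_1\to v$ succeeds immediately.
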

\begin{proof}
Fix a semi-transitive orientation $\phi$ of $G\setminus e$ and let $e=xy$.
We will show that one can extend the semi-transitive orientation $\phi$ of $G\setminus e$ to a semi-transitive orientation of $G_e$.
Suppose that $G_e$ is obtained from $G$ by replacing the edge $e$ with a path $P=x,p_1,\dots,p_t,y$, for $t \geq 1$.
Let $\phi_e$ be an acyclic orientation of $G_e$ such that each edge of $G\setminus e$ follows the orientation of $\phi$ and the orientations of the edges on $P$ are $x\from p_1 \to \cdots\to p_t\to y$.
Note that $\phi_e$ is still an acyclic orientation of $G_e$ since $\phi_e(P)$ is not a directed path. 
If $\phi_e$ is not a semi-transitive orientation of $G_e$, then there exists a shortcut $S$ in $\phi_e(G_e)$. 
Therefore, there is a directed path either $Q=x\from q_1\from \cdots \from q_s\from y$
where $V(Q)\subset V(G\setminus e)$ and $V(S)=V(P)\cup V(Q)$ or $Q'=x\to q'_1\to\cdots\to q'_{s'}\to y$ where $V(Q')\subset V(G\setminus e)$, $V(S)=V(P)\cup V(Q')$, and $t=1$.
Note that $Q$ and $Q'$ cannot both exist since $V(Q)\cup V(Q')$ would contain a directed cycle in $\phi(G\setminus e)$. 

Suppose that $Q$ exists but $Q'$ does not.
We claim that changing the orientation of $P$ to $x\from p_1 \from \cdots\from p_t\from y$ gives a semi-transitive orientation of $G_e$. 
If not, then for some positive integer $k$, there must exist a path $X=x,x_1,\dots,x_{k-1},y$ ($x=x_0,y=x_k$) in $G\setminus e$ where the orientation of $X$ in $G\setminus e$ is $x_{j-1}\from x_j$ for some $j \in [k]$ and $x_{i-1}\to x_i$ for each $i\in [k]\setminus\{j\}$.
Also, the sets of vertices of $X$ and $Q$ have only $x$ and $y$ in common. 
Otherwise, let $i_0$ and $i_1$ be the smallest and the largest, respectively, $i \in [k]$ such that $x_i=q_j$ for some $j \in [s]$; let $x_{i_0}=q_{j_0}$ and $x_{i_1}=q_{j_1}$. 
Now, $G\setminus e$ contains a cycle $x \to x_1 \to \dots \to x_{i_0}=q_{j_0} \to q_{j_0-1} \to \dots \to q_1 \to x$ or $y \to q_s \to \dots \to q_{j_1}=x_{i_1} \to x_{i_1+1} \to \dots \to x_{k-1} \to y$, which contradicts that $\phi_e$ is acyclic. 
Thus, $V(X)\cap V(Q)=\{x, y\}$, and therefore, the vertices of $X$ and $Q$ form a shortcut in $G \setminus e$ since the edge $e=xy$ is not in $G \setminus e$. 
Hence, such a path $X$ does not exist in $G \setminus e$.

Now, assume that $t=1$ and $Q'$ exists but $Q$ does not. 
We claim that changing the orientation of $P$ to $x\to p_1 \to y$ gives a semi-transitive orientation of $G_e$.
If not, then for some positive integer $k$, there exists a path $X=x,x_1,\dots,x_{k-1},y$ ($x=x_0,y=x_k$) in $G\setminus e$ where the orientation of $X$ in $G\setminus e$ is $x_{j-1}\to x_j$ for some $j \in [k]$ and $x_{i-1}\from x_i$ for each $i\in [k]\setminus\{j\}$. 
By similar arguments as above, the vertices of $X$ and $Q'$ would contain a cycle or form a shortcut in $G\setminus e$. 
Hence, such a path $X$ does not exist in $G\setminus e$.

In all cases, we extended a semi-transitive orientation $\phi$ of $G\setminus e$ to a semi-transitive orientation of $G_e$. 
\end{proof}

Now, we show two different sufficient conditions on an edge $e$ such that deleting $e$ preserves the semi-transitive orientability of the graph. 
By Theorem~\ref{ediffes}, it is enough to show Theorem~\ref{k4free} for edge-deletions.

\begin{theorem}\label{k4del}
Let $e$ be an edge of a semi-transitively orientable graph $G$.
If $e$ is not contained in a $K_4$, then $G\setminus e$ is also semi-transitively orientable.
\end{theorem}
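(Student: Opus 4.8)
The plan is to show that the restriction $\phi' := \phi|_{G\setminus e}$ of the given semi-transitive orientation $\phi$ of $G$ is already a semi-transitive orientation of $G\setminus e$; producing one such orientation is enough to conclude that $G\setminus e$ is semi-transitively orientable. Deleting an edge cannot create a directed cycle, so $\phi'$ is automatically acyclic, and the only thing left to rule out is that removing $e=xy$ produces a shortcut. I would therefore argue by contradiction: suppose $S$ is a shortcut of $\phi'(G\setminus e)$ on a vertex set $\{v_1,\dots,v_n\}$ with $n\geq 4$, having shortcut edge $v_1\to v_n$ and a Hamiltonian directed path $v_1\to\cdots\to v_n$.

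First I would observe that both $x$ and $y$ must lie in $V(S)$. Indeed, if at most one endpoint of $e$ belonged to $V(S)$, then $e$ would not be an induced edge on $V(S)$, so the induced subdigraphs of $\phi'(G\setminus e)$ and of $\phi(G)$ on $V(S)$ would coincide; then $S$ would already be a shortcut of $\phi(G)$, contradicting the semi-transitivity of $\phi$. Hence $x,y\in V(S)$.

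Next, consider the induced subdigraph $\hat S$ of $\phi(G)$ on $V(S)$, which is obtained from $S$ by adding back the single directed edge $e$. The shortcut edge $v_1\to v_n$ and the path $v_1\to\cdots\to v_n$ both lie in $S$, hence persist in $\hat S$, so $\hat S$ still carries the edge-plus-path structure required of a shortcut. Since $\phi$ is semi-transitive, $\hat S$ cannot be a shortcut, and the only remaining possibility is that $\hat S$ is transitive. The key step is to upgrade transitivity to completeness: combining transitivity with the Hamiltonian path $v_1\to\cdots\to v_n$ forces $v_i\to v_j$ for all $i<j$, by a short induction on $j-i$. Thus $\hat S$ is a transitive tournament, and so $G[V(S)]$ is the complete graph $K_n$.

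Finally, I reach a contradiction with the hypothesis on $e$. Because $n\geq 4$, the set $V(S)$ contains, besides $x$ and $y$, at least two further vertices $u$ and $w$; and since $G[V(S)]=K_n$ is complete, the four vertices $x,y,u,w$ are pairwise adjacent in $G$, so they induce a $K_4$ containing $e=xy$. This contradicts the assumption that $e$ lies in no $K_4$. Hence $\phi'(G\setminus e)$ contains no shortcut and is semi-transitive, as desired. I expect the main (if modest) obstacle to be the argument that restoring $e$ must force transitivity and then completeness on $V(S)$; once that is established the $K_4$ appears immediately.
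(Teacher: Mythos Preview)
Your proof is correct and follows essentially the same approach as the paper: restrict $\phi$ to $G\setminus e$, assume a shortcut $S$ appears, observe that the induced subdigraph of $\phi(G)$ on $V(S)$ must then be transitive (hence a complete tournament via the Hamiltonian path), and conclude that $e$ lies in a $K_4$. The paper's version is terser and reverses the order of the two observations (completeness of $G[V(S)]$ first, $x,y\in V(S)$ second), but the argument is the same.
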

\begin{proof}
Let $\phi$ be a semi-transitive orientation of $G$ and let $e=xy$.
Assume that $e$ is not in a $K_4$-subgraph of $G$.
Suppose to the contrary that there is a shortcut $S$ in $\phi(G\setminus e)$, which further implies that $|V(S)|\geq 4$. 
Since $\phi(G)$ does not contain a shortcut, it must be that $G[V(S)]$ is a complete graph of order at least $4$ and $\phi(G[V(S)])$ is transitive.
Since $S$ is a shortcut in $\phi(G\setminus e)$, it must be that $x,y \in V(S)$.
Now, $e$ belongs to a complete subgraph of $G$ of order at least $4$, which contradicts the assumption on $e$.
\end{proof}

A \df{diamond} is the graph obtained from $K_4$ by removing an edge; see Figure~\ref{diamond} for an illustration.
The following corollary regarding edge-additions in graphs with no induced diamond is a direct consequence of Theorem~\ref{k4free}.
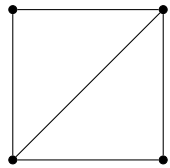
\begin{figure}[htbp]
\centering
\begin{tikzpicture}[main node/.style={fill,circle,draw,inner sep=0pt,minimum size=3pt}, scale=2]
\node[main node] (v1) at (0,0){};
\node[main node] (v2) at (1,0){};
\node[main node] (v3) at (0,1){};
\node[main node] (v4) at (1,1){};
\draw (v1)--(v2)--(v4)--(v3)--(v1)--(v4);
\end{tikzpicture}
\caption{A diamond.}
\label{diamond}
\end{figure}

\begin{cor}
Let $G$ be a graph that does not have a diamond as an induced subgraph.
If $G$ is not semi-transitively orientable,
then for every two non-adjacent vertices $u$ and $v$, the graph obtained by adding the edge $uv$ to $G$ is still not semi-transitively orientable.
\end{cor}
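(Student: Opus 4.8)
The plan is to prove the contrapositive. Concretely, I would show that whenever there exist non-adjacent vertices $u$ and $v$ with $G + uv$ semi-transitively orientable, the graph $G$ itself must be semi-transitively orientable. Since adding the edge $uv$ to $G$ and then deleting it recovers $G$, I would set $H = G + uv$ and regard $uv$ as an edge $e$ of the semi-transitively orientable graph $H$. The goal is then to invoke the edge-deletion statement (Theorem~\ref{k4del}) to conclude that $H \setminus uv = G$ is semi-transitively orientable.

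The heart of the argument is verifying the hypothesis of Theorem~\ref{k4del}, namely that $e = uv$ is not contained in any $K_4$-subgraph of $H$. I would establish this directly from the diamond-free assumption. Suppose, for contradiction, that $uv$ lies in a $K_4$-subgraph of $H$ on vertex set $\{u, v, a, b\}$. Then all six edges among these four vertices are present in $H$. Because $G = H \setminus uv$ differs from $H$ only by removal of the single edge $uv$, the induced subgraph $G[\{u, v, a, b\}]$ contains exactly the five edges of $K_4$ other than $uv$, which is precisely a diamond. This yields an induced diamond in $G$, contradicting the hypothesis that $G$ has no diamond as an induced subgraph. Hence no such $K_4$ exists.

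With this verification in hand, Theorem~\ref{k4del} applies to the edge $uv$ of $H$ and gives that $H \setminus uv = G$ is semi-transitively orientable, which completes the contrapositive and hence the corollary.

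The step I expect to require the most care is the correspondence observation, that a $K_4$-subgraph of $G + uv$ through the newly added edge $uv$ carries exactly the same data as an induced diamond of $G$. This is the precise point at which the diamond-free hypothesis on $G$ is translated into the ``no $K_4$ through $e$'' hypothesis demanded by Theorem~\ref{k4del}. Everything else is a formal application of the previously established result, so I anticipate no further obstacle.
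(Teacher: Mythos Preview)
Your proposal is correct and matches the paper's intent: the paper states the corollary as ``a direct consequence of Theorem~\ref{k4free}'' without spelling out any argument, and the contrapositive you give—translating an induced-diamond-free $G$ into the ``$uv$ lies in no $K_4$ of $G+uv$'' hypothesis of Theorem~\ref{k4del}—is precisely the intended one-line derivation.
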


Note that Theorem~\ref{k4del} implies that if $G$ is a semi-transitively orientable graph that does not have $K_4$ as a subgraph, then $G \setminus e$ is also semi-transitively orientable for every edge $e$ in $G$.
This statement is not true if we replace $K_4$ with $K_5$.
For example, the graph in Figure~\ref{k5free} is semi-transitively orientable by Theorem~\ref{thm:6vx}, but after deleting the edge $uv$, we obtain $W_5$, which is not semi-transitively orientable.
\begin{figure}[htbp]
\centering
\begin{tikzpicture}[main node/.style={fill,circle,draw,inner sep=0pt,minimum size=3pt}, scale=1]
\node[main node] (v1) at ({90+72*0}:2){};
\node[main node,label=left:$u$] (v2) at ({90+72*1}:2){};
\node[main node] (v3) at ({90+72*2}:2){};
\node[main node] (v4) at ({90+72*3}:2){};
\node[main node,label=right:$v$] (v5) at ({90+72*4}:2){};
\node[main node] (v6) at (0,0){};

\draw (v1)--(v2)--(v3)--(v4)--(v5)--(v1);
\draw (v1)--(v6); \draw (v2)--(v6); \draw (v3)--(v6); \draw (v4)--(v6); \draw (v5)--(v6);
\draw (v2)--(v5);
\end{tikzpicture}
\caption{Deleting the edge $uv$ gives $W_5$.}
\label{k5free}
\end{figure}
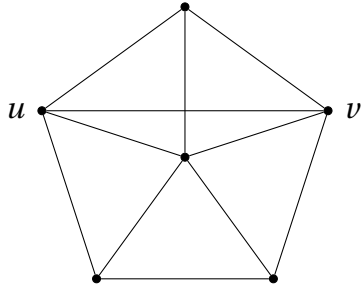

On the other hand, for every semi-transitively orientable graph, even if it contains $K_4$ as a subgraph, we can always find an edge $e$ where deleting $e$ preserves the semi-transitive orientability, unless the graph has no edges. 
For instance, by Theorem~\ref{thm:6vx}, we know that an edge-deletion of any edge other than $uv$ in the graph in Figure~\ref{k5free} preserves the semi-transitive orientability.

We now prove Theorem~\ref{toempty}. 
We actually prove Theorem~\ref{thm:strongtoempty}, which is a stronger result stating that we can remove a certain edge. 
A \df{source} in a directed graph is a vertex $u$ where every directed edge $uv$ incident with $u$ is oriented $u\to v$. 
A \df{sink} in a directed graph is a vertex $v$ where every directed edge $uv$ incident with $v$ is oriented $u\to v$. 
One can observe the following:

\begin{itemize}
\item If a directed graph contains no directed cycle, then it has at least one sink.
\item For every transitive orientation $\phi$ of a complete graph $K$, the directed graph $\phi(K)$ has exactly one source and exactly one sink.
\item	
Suppose $K$ is a complete graph of order at least $4$.
For every transitive orientation $\phi$ of $K$, the directed graph $\phi(K\setminus e)$ is not a shortcut if $e$ is an edge that connects the vertices corresponding to the source and the sink of $\phi(K)$.
\end{itemize}

\begin{theorem}\label{thm:strongtoempty}
Let $\phi$ be a semi-transitive orientation of a non-empty semi-transitively orientable graph $G$.
If $x_n$ is a sink of $\phi(G)$, $K$ is a maximal complete subgraph of $G$ containing $x_n$, and $x_1$ is a source of $\phi(K)$, then $G\setminus x_1 x_n$ is semi-transitively orientable.
\end{theorem}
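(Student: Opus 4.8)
The plan is to keep the orientation $\phi$ and simply delete the edge $x_1x_n$, then show that the restricted orientation $\phi(G\setminus x_1x_n)$ is still semi-transitive. Acyclicity is immediate, since deleting an edge cannot create a directed cycle, so the whole difficulty is to rule out shortcuts. Suppose for contradiction that $S$ is a shortcut in $\phi(G\setminus x_1x_n)$. Exactly as in the proof of Theorem~\ref{k4del}, I would compare $S$ with the induced subdigraph of $\phi(G)$ on $V(S)$: since $\phi(G)$ has no shortcut, this forces $G[V(S)]$ to be a clique on which $\phi$ is transitive, forces both $x_1$ and $x_n$ to lie in $V(S)$, and shows that $S$ is obtained from the transitive tournament $\phi(G[V(S)])$ by deleting precisely the edge $x_1\to x_n$.

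Next I would locate $x_1$ and $x_n$ inside the linear order $u_1\to\cdots\to u_N$ (with $N\ge 4$) of the transitive tournament on $V(S)$. Because $x_n$ is a sink of $\phi(G)$, it is the sink of this tournament, so $x_n=u_N$; write $x_1=u_i$. If $x_1$ were the source $u_1$, then the deleted edge $u_1u_N$ would be the unique chord joining the endpoints of the unique Hamiltonian directed path $u_1\to\cdots\to u_N$, and $S$ could not be a shortcut; hence $i\ge 2$. Consequently the genuine source $u_1$ of $V(S)$ is distinct from $x_1$ and satisfies $u_1\to x_1$ and $u_1\to x_n$.

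The crux is then to exploit the hypotheses on $K$ and $x_1$. Since $x_1$ is a source of $\phi(K)$, every vertex of $K$ other than $x_1$ is dominated by $x_1$; as $u_1\to x_1$, this shows $u_1\notin K$. But $u_1$ is adjacent to $x_n\in K$, so by maximality of the clique $K$ there must be a vertex $w\in K$ not adjacent to $u_1$ (otherwise $K\cup\{u_1\}$ would be a larger clique containing $x_n$). Now $x_1\to w$ because $x_1$ is a source of $\phi(K)$, and $w\to x_n$ because $x_n$ is a sink of $\phi(G)$, so the four distinct vertices $u_1,x_1,w,x_n$ carry the directed path $u_1\to x_1\to w\to x_n$ together with the chord $u_1\to x_n$, while the edge $u_1w$ is absent. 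This is an induced, non-transitive subdigraph of $\phi(G)$ on four vertices, that is, a shortcut in $\phi(G)$, contradicting that $\phi$ is semi-transitive on $G$. Therefore no shortcut $S$ exists and $\phi(G\setminus x_1x_n)$ is semi-transitive.

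Finally, to recover Theorem~\ref{toempty} from this statement I would observe that a non-empty graph has a non-isolated vertex, and that a sink of $\phi(G)$ restricted to the non-isolated vertices is a non-isolated sink $x_n$ of $\phi(G)$; then any maximal clique $K$ through $x_n$ has at least two vertices, so its source $x_1$ differs from $x_n$ and $x_1x_n$ is a genuine edge to delete. The main obstacle, and the only place where the specific choice of $x_1$, $x_n$, and $K$ is used, is the third paragraph: producing the non-neighbor $w$ from maximality and verifying that $u_1\to x_1\to w\to x_n$ really is an induced non-transitive configuration. The reduction to the transitive-clique case, by contrast, is routine and mirrors Theorem~\ref{k4del}.
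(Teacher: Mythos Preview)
Your proof is correct and follows essentially the same approach as the paper: both restrict $\phi$ to $G\setminus x_1x_n$, argue that a hypothetical shortcut $S$ forces $G[V(S)]$ to be a transitive clique containing $x_1,x_n$ with $x_n$ as sink and $x_1$ \emph{not} the source, and then derive a contradiction from the maximality of $K$ together with the source $u_1$ of $V(S)$. The only cosmetic difference is in the final step: the paper runs the directed path $u_1\to x_1\to x_2\to\cdots\to x_n$ through \emph{all} of $K$ and uses semi-transitivity to force $\{u_1\}\cup V(K)$ to be a clique (contradicting maximality directly), whereas you invoke maximality first to extract a single non-neighbor $w\in K$ and then exhibit the explicit four-vertex shortcut $u_1\to x_1\to w\to x_n$; these are contrapositives of one another.
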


We claim that the orientation $\phi$ of $G\setminus x_1x_n$ is still semi-transitive.
It is clear when $K$ is a complete graph on {\color{red}$2$} vertices, so we may assume that $K$ has order at least {\color{red}$3$}.
The following lemma is crucial.

\begin{claim}\label{clm:sourcesink}
If $K' \neq K$ is a complete subgraph of $G$ containing $x_1$ and $x_n$,
then $x_n$ is a sink of $\phi(K')$ and $x_1$ is a source of $\phi(K')$.
\end{claim}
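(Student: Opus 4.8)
The plan is to treat the two conclusions separately, since one of them is essentially free. For the sink assertion, note that $x_n$ is a sink of the \emph{whole} digraph $\phi(G)$, so every edge of $G$ incident with $x_n$ is oriented into $x_n$; as $K'$ is a subgraph of $G$, the same holds within $\phi(K')$, and so $x_n$ is automatically a sink of $\phi(K')$. Thus the only real content is showing that $x_1$ is a source of $\phi(K')$, which I would prove by contradiction.

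Assume $x_1$ is not a source of $\phi(K')$, so there is a vertex $w \in V(K')$ with the edge oriented $w \to x_1$. First I would observe that $w \notin V(K)$: since $x_1$ is the source of $\phi(K)$, every other vertex $u$ of $K$ satisfies $x_1 \to u$, which rules out $w \to x_1$. Next, using that $\phi(K)$ is transitive (being the restriction of a semi-transitive orientation to a clique), I would linearly order $V(K)$ as $x_1 = a_1, a_2, \dots, a_m = x_n$ so that $a_i \to a_j$ exactly when $i < j$; here $m = |V(K)| \ge 3$ by the standing assumption. Concatenating $w \to x_1$ with this order yields a directed path $w \to a_1 \to a_2 \to \cdots \to a_m$ on $m + 1 \ge 4$ vertices, and since $w, x_n \in V(K')$ and $x_n$ is a sink, the edge $w \to a_m$ is present as well (recall $a_m = x_n$).

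It then remains to recognize the vertex set $S = \{w, a_1, \dots, a_m\}$ as a shortcut of $\phi(G)$. The path and the edge $w \to a_m$ are already in place, so the crux --- and the step I expect to be the main obstacle --- is to verify that $\phi(G[S])$ is \emph{non}-transitive. Here I would invoke the maximality of $K$: if $\phi(G[S])$ were transitive, then applying transitivity repeatedly along the path $w \to a_1 \to \cdots \to a_m$ would force the edge $w \to a_i$ to exist for every $i$, so $w$ would be adjacent to all of $V(K)$; then $V(K) \cup \{w\}$ would be a complete subgraph of $G$ properly containing $K$ and still containing $x_n$, contradicting the maximality of $K$. Hence $\phi(G[S])$ is non-transitive, which makes $S$ a shortcut and contradicts the semi-transitivity of $\phi$. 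This contradiction shows that no such $w$ exists, so $x_1$ is a source of $\phi(K')$, completing the claim.
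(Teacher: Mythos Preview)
Your proof is correct and follows essentially the same route as the paper's: both build the directed path $w\to x_1\to\cdots\to x_n$ together with the arc $w\to x_n$ and then use the maximality of $K$ to contradict semi-transitivity. The only cosmetic differences are that the paper takes $w$ to be the (unique) source of $\phi(K')$ rather than an arbitrary in-neighbour of $x_1$, and it phrases the final step as ``semi-transitivity forces $G[\{w\}\cup V(K)]$ to be complete'' rather than your contrapositive ``$\phi(G[S])$ is non-transitive, hence a shortcut''.
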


\begin{proof}
It is not hard to see when $K'\subset K$, so we may assume $K'\nsubseteq K$.
It is clear that $x_n$ is the sink of $\phi(K')$ since $x_n$ is a sink of $\phi(G)$.
Suppose to the contrary that some vertex $y\in V(K')$ such that $y \neq x_1$ is the source of $\phi(K')$.
Note that $y\neq x_n$ since otherwise that would imply $K'$ is a single vertex. 
Since $y$ is the source of $K'$, the directed edges $y\to x_1$ and $y\to x_n$ exist. 
Let $K=\{x_1, \ldots, x_n\}$ and without loss of generality, we may assume that $x_1\to \cdots \to x_n$ is the longest directed path in $\phi(K)$.
Consider the directed path $y \to x_1 \to \cdots\to x_n$.
Since $y\to x_n$ exists and $\phi$ is a semi-transitive orientation of $G$, it must be that $G[\{x_1,\dots,x_n,y\}]$ is a complete subgraph of $G$.
Yet, this contradicts the maximality of $K$.
\end{proof}


\begin{proof}[Proof of Theorem~\ref{thm:strongtoempty}]
 Suppose to the contrary that there is a shortcut $S$ in $\phi(G\setminus x_1 x_n)$.
 Since $\phi(G)$ does not contain a shortcut, it must be that $G[V(S)]$ is a complete graph and $\phi(G[V(S)])$ is transitive.
We know that $x_n$ is the sink of $S$ since $x_n$ is a sink of $\phi(G)$. 
 On the other hand, by the above, it must be that $x_1$ is not the source of $S$ since $S$ is a shortcut in $\phi(G\setminus x_1 x_n)$.
 This contradicts Claim~\ref{clm:sourcesink}.
\end{proof}

Thus, starting from a semi-transitively orientable graph $G$ that is not the empty graph, one can obtain the empty graph by applying a sequence of edge-deletions while preserving the semi-transitive orientability.


\section{Edge-additions}\label{sec3}

In this section, we focus on edge-additions.
Unlike edge-deletions, edge-additions are not guaranteed to preserve the semi-transitive orientability even for graphs that do not contain $K_4$ as subgraphs.
This is because of the following result by Halld\'{o}rsson, Kitaev, and Pyatkin~\cite{HKP2011}.

\begin{theorem}[\cite{HKP2011}]\label{triangle}
For each positive integer $k$, there exists a graph with no $K_3$-subgraph that is not semi-transitively orientable.
\end{theorem}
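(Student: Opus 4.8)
The plan is to produce, for each $k$, a graph of large (odd) girth — in particular with no $K_3$ — that admits no semi-transitive orientation. The first step I would take is to record a clean reformulation of the shortcut condition in triangle-free graphs, which strips away all local structure. If $G$ is triangle-free, then for $m\ge 3$ no induced subgraph on $m$ vertices is complete, so by the definition of a shortcut an acyclic orientation $\phi$ of $G$ fails to be semi-transitive precisely when $\phi(G)$ contains a directed path $v_1\to v_2\to\cdots\to v_m$ on $m\ge 4$ vertices together with the edge $v_1\to v_m$; equivalently, $G$ has a cycle whose induced orientation has a unique source $v_1$ and a unique sink $v_m$ with $v_1 v_m\in E(G)$. (Indeed, triangle-freeness rules out the length-$2$ chords that would collapse such a configuration into a triangle, and it makes the induced subdigraph automatically non-transitive.) Thus, for triangle-free graphs, semi-transitivity becomes a purely \emph{global} statement about how the sources and sinks of cycles may be placed, and the task reduces to building a high-girth graph in which no acyclic orientation can avoid a cycle with an adjacent source--sink pair.

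It is worth emphasizing at the outset why this is delicate, since it dictates the method. In a triangle-free graph every neighborhood $N(v)$ is an independent set, hence trivially a comparability graph, so Theorem~\ref{nb} imposes no constraint whatsoever; likewise, no small forbidden induced subgraph such as $W_5$ is available to force non-orientability. The obstruction must therefore be genuinely global. Moreover, one cannot simply take a dense non-orientable graph and thin it out to gain girth: semi-transitive orientability is hereditary (Proposition~\ref{vert-del}), and lengthening cycles by subdivision is governed by Theorem~\ref{ediffes}, so both vertex deletion and subdivision tend to \emph{restore} orientability rather than preserve its failure. Any successful argument must create the obstruction and the large girth simultaneously.

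Given this, I would argue by the probabilistic method. Fix the target girth $g=g(k)$ and work in a sparse random model tuned to have few short cycles, for instance $G(n,p)$ with $p\approx n^{-1+1/(g-1)}$, or a random regular graph; then delete one vertex from each short cycle to obtain a graph $G'$ of girth at least $g$ on a linear number of vertices. The heart of the proof is to show that such a $G'$ is, with positive probability, not semi-transitively orientable, using the reformulation above to express this as the joint unsatisfiability of the source/sink constraints over all long cycles of $G'$. The natural plan is to isolate a global statistic measuring these long-range incompatibilities and to control its moments, arranging the edge distribution (an expander-like structure) so that every acyclic orientation is forced to realize a forbidden cycle.

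The hard part is exactly this last estimate, and here I expect the main obstacle. High-girth sparse graphs are locally tree-like, and on any forest every acyclic orientation is automatically semi-transitive, so such graphs typically admit exponentially many semi-transitive orientations; consequently a naive first-moment count over all orientations is far larger than $1$ and proves nothing. The crux is to replace this doomed count by a global invariant capturing the incompatibility of the source/sink placements around the many long cycles of the random graph, to show its moments force non-orientability with positive probability, and then to verify that the vertex deletions used to install large girth do not dismantle this global obstruction. Managing this tension — enforcing large girth while retaining a robust, genuinely non-local obstacle to orientation — is where the real work lies.
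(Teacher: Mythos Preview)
Your proposal correctly isolates the difficulty but does not resolve it, so there is a genuine gap. You reduce the problem to showing that a sparse high-girth random graph, after cleanup, admits no acyclic orientation avoiding a cycle with adjacent source and sink; you then observe that a first-moment argument over orientations fails badly because high-girth graphs are locally tree-like and hence admit a profusion of semi-transitive orientations. You are right that this naive count is doomed, but you never supply the replacement ``global invariant'' you say is needed. As it stands the proposal is a diagnosis, not a proof.

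The missing idea, which the paper uses, is to route the argument through the \emph{chromatic number} rather than attempting a direct probabilistic analysis of orientations. Erd\H{o}s's theorem (Theorem~\ref{erdos}) already packages the probabilistic work: take $H$ with girth large (say $\ge 10$) and $\chi(H)\ge 4$. By the Gallai--Roy principle, every acyclic orientation of $H$ contains a directed path $v_1\to v_2\to v_3\to v_4$ of length $3$, since otherwise the level sets would give a proper $3$-coloring. Now augment $H$ to a graph $G$ by adding an edge between every pair of vertices at distance exactly $3$ in $H$. The large girth of $H$ guarantees that $G$ is still triangle-free (indeed has no short odd cycles), that $v_1v_3\notin E(G)$, and that any acyclic orientation of $G$ restricts to an acyclic orientation of $H$; thus the added edge $v_1v_4$ together with the path $v_1\to v_2\to v_3\to v_4$ is a shortcut in every acyclic orientation of $G$. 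This is exactly the construction underlying Theorem~\ref{odd}.

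The conceptual point you were missing is that ``every acyclic orientation has a long directed path'' is equivalent to ``high chromatic number,'' so one should manufacture the obstruction by forcing $\chi$ up, not by trying to control cycle orientations directly. Once you see this, the proof is short and deterministic (modulo Erd\H{o}s's theorem), and the tension you worried about---large girth versus a robust global obstacle---dissolves, because chromatic number \emph{is} that robust global obstacle and Erd\H{o}s's theorem already reconciles it with large girth.
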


The key ingredient in the proof of Theorem~\ref{triangle} is the following result by Erd\H{o}s \cite{Erd59}.
Recall that the \df{girth} of a graph is the length of a shortest cycle in the graph, and the \df{chromatic number} of a graph $G$ is the minimum positive integer $k$ such that $V(G)$ can be partitioned into $k$ independent sets. 
\begin{theorem}[\cite{Erd59}]\label{erdos}
For each positive integers $k$ and $l$, there exists a graph with chromatic number at least $k$ and girth at least $l$.
\end{theorem}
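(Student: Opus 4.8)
The plan is to use the probabilistic method, following Erdős's classical argument, applied to the Erdős--Rényi random graph $G(n,p)$ in which each of the $\binom{n}{2}$ possible edges is included independently with probability $p$. The key is to choose $p$ as a function of $n$ so that two competing goals can be met at once: short cycles should be rare enough that we can destroy all of them by deleting few vertices, while the independence number $\alpha(G)$ should remain small, since $\chi(G)\ge |V(G)|/\alpha(G)$. I would fix a constant $\theta$ with $0<\theta<1/l$ and set $p=n^{\theta-1}$, so that the expected degree $np=n^{\theta}$ grows, but slowly.

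First I would bound the number of short cycles. For each $i$ with $3\le i\le l$, the expected number of cycles of length $i$ in $G(n,p)$ is $\tfrac{n!}{(n-i)!\,2i}\,p^{i}\le \tfrac{(np)^{i}}{2i}$, so the expected total number $X$ of cycles of length at most $l$ satisfies $\mathbb{E}[X]\le \sum_{i=3}^{l}(np)^{i}=O(n^{l\theta})$. Since $l\theta<1$, this is $o(n)$, so by Markov's inequality $\Pr[X\ge n/2]\to 0$ as $n\to\infty$. Second, I would control the independence number by a union bound. Setting $a=\lceil (3/p)\ln n\rceil$, the probability that $G$ contains an independent set of size $a$ is at most
$$\binom{n}{a}(1-p)^{\binom{a}{2}}\le \bigl(n\,e^{-p(a-1)/2}\bigr)^{a},$$
and with this value of $a$ the base tends to $0$, so $\Pr[\alpha(G)\ge a]\to 0$.

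Finally I would combine the two estimates. For $n$ large, both bad events have probability less than $1/2$, so with positive probability $G$ has fewer than $n/2$ cycles of length at most $l$ and satisfies $\alpha(G)<a$. Fixing such a $G$ and deleting one vertex from each short cycle yields a graph $G'$ with at least $n/2$ vertices, girth greater than $l$, and $\alpha(G')\le \alpha(G)<a=O(n^{1-\theta}\ln n)$. Hence $\chi(G')\ge |V(G')|/\alpha(G')=\Omega(n^{\theta}/\ln n)$, which exceeds $k$ once $n$ is sufficiently large; such a $G'$ then has chromatic number at least $k$ and girth at least $l$, as required.

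The main obstacle --- and the heart of the argument --- is precisely the choice of $p$: it must be large enough to force $\alpha(G)$ down (so that $\chi$ is pushed up) yet small enough that short cycles are sparse enough to be removed without substantially lowering either the vertex count or the chromatic number. The inequality $l\theta<1$ is exactly the constraint that reconciles these two demands, and getting the two exponents to cooperate is where the care is needed.
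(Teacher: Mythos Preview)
Your argument is the classical Erd\H{o}s probabilistic proof and is correct as written: the choice $p=n^{\theta-1}$ with $0<\theta<1/l$, the Markov bound on short cycles, the union bound on independent sets, and the deletion step are all standard and carried out carefully.

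Note, however, that the paper does not give its own proof of this statement. Theorem~\ref{erdos} is simply quoted from~\cite{Erd59} as a known ingredient and then invoked in the proof of Theorem~\ref{odd}; there is no argument in the paper to compare against. What you have supplied is essentially Erd\H{o}s's original 1959 proof, so in that sense you are aligned with the cited source rather than with anything the present paper does.
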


Now we give a proof of Theorem~\ref{oddcycle}.
It is sufficient to show the following generalization of Theorem~\ref{triangle}.
Note that if we remove an edge from a graph with no odd cycle of length at most $2k+1$, then the resulting graph also has no odd cycle of length at most $2k+1$. Thus if we take an edge-minimal graph $G$ that satisfies the condition in Theorem~\ref{odd}, then for each edge $e$ in $G$, the graph $G-e$ satisfies the condition in Theorem~\ref{oddcycle}.
The proof of Theorem~\ref{odd} can be done by utilizing Theorem~\ref{erdos}.

\begin{theorem}\label{odd}
For each positive integer $k$, there exists a graph with no odd cycle of length at most $2k+1$ that is not semi-transitively orientable.
\end{theorem}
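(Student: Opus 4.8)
The plan is to reduce Theorem~\ref{odd} to a chromatic-number bound for semi-transitively orientable graphs of large girth, and then feed this bound into Erd\H{o}s's theorem (Theorem~\ref{erdos}). Concretely, I would isolate the following statement: there is a function $f$ such that every semi-transitively orientable graph of girth at least $g$ has chromatic number at most $f(g)$. Granting this, fix $k$ and apply Theorem~\ref{erdos} to obtain a graph $H$ with girth at least $2k+2$ and chromatic number at least $f(2k+2)+1$. Since the girth of $H$ exceeds $2k+1$, the graph $H$ contains no cycle of length at most $2k+1$, and in particular no odd cycle of length at most $2k+1$; and if $H$ admitted a semi-transitive orientation, the bound would force $\chi(H)\le f(2k+2)$, a contradiction. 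Hence $H$ is not semi-transitively orientable, which is exactly the graph required by Theorem~\ref{odd}. This mirrors the proof of Theorem~\ref{triangle}, the only difference being that the girth threshold is chosen large enough to kill all short odd cycles rather than merely triangles.

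A structural lemma I would establish first, as it holds uniformly under our hypothesis and is the natural generalization of the triangle-free case, is that every directed path in a semi-transitive orientation is an \emph{induced} path. Suppose $v_0\to v_1\to\cdots\to v_n$ is a directed path and $v_iv_j\in E(G)$ with $j\ge i+2$; acyclicity forces the orientation $v_i\to v_j$. If $j=i+2$, this produces a triangle, which is excluded since for $k\ge 1$ the hypothesis already forbids the odd cycle $C_3$. If $j\ge i+3$, then $v_i\to v_{i+1}\to\cdots\to v_j$ together with $v_i\to v_j$ is a shortcut: the induced subgraph on $\{v_i,\dots,v_j\}$ has at least four vertices and contains a cycle of length $j-i+1\ge 4$, so it is not complete. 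Either way we contradict semi-transitivity, so no such chord exists.

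With this in hand, the remaining and main task, and the step I expect to be the principal obstacle, is to turn a semi-transitive orientation of a high-girth graph into a proper coloring using only $f(g)$ colors. The naive coloring by longest-directed-path length is proper but wastes arbitrarily many colors, since a single long induced path already forces it to be large; so one needs a coloring that exploits the large-girth hypothesis more strongly than the mere absence of triangles. The available leverage is that the graph is locally a tree, that directed paths are induced by the lemma above, and that no short cycle can let a candidate color class ``wrap around.'' The crux is to combine these facts into a palette whose size depends only on $g$, and this is precisely the quantitative heart inherited from the argument of Halld\'{o}rsson, Kitaev, and Pyatkin behind Theorem~\ref{triangle}; the generalization to our setting is obtained by running that argument with the girth threshold $2k+2$ in place of the triangle-free threshold. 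Once the coloring bound $f(g)$ is secured, the assembly in the first paragraph completes the proof.
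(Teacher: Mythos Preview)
Your proposal rests on a statement you do not prove: the existence of a function $f$ such that every semi-transitively orientable graph of girth at least $g$ has chromatic number at most $f(g)$. You correctly identify this as the ``principal obstacle,'' but then dispatch it by asserting that it is ``precisely the quantitative heart inherited from the argument of Halld\'{o}rsson, Kitaev, and Pyatkin behind Theorem~\ref{triangle}.'' This is a misreading. Neither that paper nor the present one establishes any such chromatic bound; the route taken is constructive, not via a colouring inequality. So your argument has a genuine gap exactly at the step you flag as crucial, and the authority you invoke to close it does not supply the missing ingredient. (Your induced-path lemma is correct, but, as you yourself note, it says nothing about chromatic number on its own: a long path already has arbitrarily long induced directed paths while remaining $2$-colourable.)

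For contrast, the paper's proof runs in the opposite direction. Instead of bounding $\chi$ for semi-transitively orientable graphs, it starts from an Erd\H{o}s graph $H$ with $\chi(H)\ge 4$ and girth at least $6k+4$, and builds the supergraph $G$ on $V(H)$ by adding an edge $xy$ whenever $H$ contains an $x$--$y$ path of length~$3$. An odd cycle of length at most $2k+1$ in $G$ would unfold to an odd closed walk of length at most $6k+3$ in $H$, contradicting the girth. And any acyclic orientation of $G$ restricts to one of $H$, which by Gallai--Roy (since $\chi(H)\ge 4$) must contain a directed path $v_1\to v_2\to v_3\to v_4$; then $v_1v_4\in E(G)$ by construction while $v_1v_3\notin E(G)$ by the girth of $H$, yielding a shortcut. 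The chromatic hypothesis is used only on $H$, and only to force a single directed $P_4$ --- no bound on $\chi$ for semi-transitively orientable graphs is ever needed.
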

\begin{proof}
Let $H$ be a graph with chromatic number at least $4$ and girth at least $6k+4$, for $k \geq 1$.
Define a supergraph $G$ of $H$ whose vertex set is the same as that of $H$ and edge set is defined as 
$$E(G) = E(H) \cup \{xy: x,y\in V(G)\text{ and there is a path from }x\text{ to }y\text{ of length }3\text{ in }H\}.$$
We claim that $G$ does not contain an odd cycle of length at most $2k+1$ and $G$ is not semi-transitively orientable.

First assume that $G$ contains an odd cycle $C$ of length $2m+1$ where $m\in[k]$.
By the definition of $G$, for each edge $uv$ in $C$, the distance between $u$ and $v$ in $H$ is either $1$ or $3$.
Consider a closed walk $C'$ in $H$ that is obtained from $C$ by replacing each edge in $E(C)\setminus E(H)$ with the corresponding path of length $3$ in $H$.
It is clear that the length of $C'$ is odd and is at most $3\times(2m+1) = 6m+3 < 6k+4$.
Hence $C'$ contains an odd cycle of length less than $6k+4$, which contradicts the assumption on the girth of $H$.

Now we will show that every acyclic orientation of $G$ has a shortcut.
Fix an acyclic orientation $\phi$ of $G$.
Note that $H$ is a subgraph of $G$. 
If $\phi(H)$ has no directed path of length $3$, then the chromatic number of $H$ is at most $3$, which contradicts that $H$ has chromatic number at least $4$. 
Thus there is a directed path of length $3$ in $\phi(H)$, say $v_1\to v_2\to v_3\to v_4$.
Then by the definition of $G$, $v_1 v_4$ is an edge in $G$ and is oriented as $v_1\to v_4$ in $\phi(G)$ because $\phi$ is acyclic.
On the other hand, $v_1$ and $v_3$ are not adjacent in $G$ because the girth of $H$ is at least $10$.
This implies that the induced subgraph of $\phi(G)$ on $\{v_1,v_2,v_3,v_4\}$ is a shortcut.
\end{proof}



In an arbitrary graph, edge-additions are possible, unless the graph was already a complete graph. 
In this section, we prove Theorem~\ref{tocomplete}, which states that we can keep adding edges to a semi-transitively orientable graph until the graph itself becomes a complete graph. 
We actually prove a stronger statement that we can keep extending the initial semi-transitive orientation. 
We divide the proof into 3 steps. 

\begin{enumerate}[Step 1.]
\item Start from a semi-transitively orientable graph $G$.
Add edges one-by-one to $G$ so that each edge addition preserves the semi-transitive orientability and the resulting graph $H_1$ is a comparability graph.
\item Add edges one-by-one to the graph $H_1$ so that each edge addition preserves the transitively orientable property and the resulting graph $H_2$ is a complete multipartite graph.
\item Add edges one-by-one to the graph $H_2$ so that each edge addition preserves the transitively orientable property and the resulting graph $K$ is a complete graph.
\end{enumerate}

We prove each step as a lemma.
In each lemma, we will use the following partition of the vertex set of the given graph $G$.
A \df{good} $\{V_i\}_{i\in[m]}$-\df{partition} of $V(G)$ is a partition of $V(G)$ into $m$ sets $V_1, \ldots, V_m$ such that $V_1$ is the set of all sources of $\phi(G)$ and for each $i\in[ m-1]$, $V_{i+1}$ is the set of all sources of $\phi\left(G\setminus \bigcup_{j\in[i]}V_j\right)$.
Note that for each $i \in [m]$, the induced subgraph $G[V_i]$ of $G$ on $V_i$ is the empty graph.

\begin{lemma}\label{semi-compar}
Let $\phi$ be a semi-transitive orientation of a semi-transitively orientable graph $G$. 
If $G$ is not a comparability graph, then it has a pair of non-adjacent vertices $u$ and $v$ such that $\phi$ can be extended to a semi-transitive orientation of $G+xy$. 
\end{lemma}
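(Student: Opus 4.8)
The plan is to exploit the failure of transitivity directly. Since $G$ is semi-transitively oriented by $\phi$ but is not a comparability graph, $\phi$ itself cannot be transitive; hence there is a directed path $a\to b\to c$ whose endpoints are non-adjacent (as $\phi$ is acyclic, $a\to b\to c$ forces $a,c$ to be either joined by $a\to c$ or non-adjacent, so non-transitivity produces such a pair). I would call such a configuration a \emph{bad triple} and aim to add precisely the arc $a\to c$ for a carefully chosen bad triple. Throughout I would use the good partition $\{V_i\}_{i\in[m]}$ to equip each vertex with a level $\ell(v)=i$ for $v\in V_i$; recall that $u\to v$ implies $\ell(u)<\ell(v)$, that $V_1$ is exactly the set of sources, and that levels strictly increase along every directed path.

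The decisive point is the choice of the bad triple. I would select $a\to b\to c$ so that (i) $a$ is a source and $c$ is a sink of $\phi(G)$, and (ii) the \emph{level-span} $\ell(c)-\ell(a)$ is as small as possible among all bad triples. To move the endpoints to a source and a sink one pushes along the orientation: if $a$ has an in-neighbour $a'$, then the directed path $a'\to a\to b\to c$ together with the non-edge $ac$ shows, by semi-transitivity, that $a'$ cannot be adjacent to $c$ (otherwise $a'\to c$ would be a shortcut arc over a non-complete $4$-vertex path), and a short case check according to whether $a'$ is adjacent to $b$ produces a bad triple with a strictly lower starting level; a symmetric push raises the terminal vertex to a sink. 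Span-minimality is what will make the decisive case below collapse to a clique.

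For the verification I would assume, for contradiction, that a shortcut $S$ arises in $\phi(G+ac)$. The new arc $a\to c$ must be \emph{essential} to $S$, for otherwise deleting it leaves a shortcut already present in $\phi(G)$; thus, writing the shortcut on $w_1\to\cdots\to w_n$ with shortcut arc $w_1\to w_n$ and $n\ge 4$, the arc $a\to c$ is either the shortcut arc or an arc of the underlying path. This leaves four positions: the shortcut arc itself ($a=w_1,\,c=w_n$), an internal path arc, the last path arc ($c=w_n$), or the first path arc ($a=w_1,\,c=w_2$). Because $c$ is a sink it has no out-arc, ruling out the internal and first-arc positions; because $a$ is a source it has no in-arc, ruling out the internal and last-arc positions. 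The only survivor is that $a\to c$ is the shortcut arc, i.e.\ there is a directed path $a=y_0\to\cdots\to y_k=c$ with $k\ge 3$ whose induced subgraph (with $ac$) is not complete. Here span-minimality finishes the job: any non-edge $y_py_q$ other than $y_0y_k$ yields, taking $p'$ maximal with $y_{p'}y_q\notin E(G)$, a bad triple $y_{p'}\to y_{p'+1}\to y_q$ of span $\ell(y_q)-\ell(y_{p'})<\ell(c)-\ell(a)$, contradicting minimality. Hence the only non-edge is $y_0y_k=ac$, now an edge, so the subgraph is complete and $S$ is not a shortcut.

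The main obstacle I anticipate is reconciling the two requirements on the triple: the case analysis needs $a\to c$ to sit between a global source and a global sink (to kill the three path-edge positions) while the same triple realizes the \emph{global} minimum level-span (to kill the shortcut-arc position). Each requirement is easy in isolation, but the source/sink pushes can enlarge the span, so the delicate step is to show that a bad triple exists that is source-to-sink and span-minimal simultaneously; I would establish this by performing the pushes only among span-minimal triples and verifying that each push can be taken to preserve the span, treating separately the obstructive subcase in which the pushed vertex turns out to be adjacent to the middle vertex. A minor point to record is that the new edge can always be oriented $a\to c$ without creating a directed cycle, since $a$ and $c$ are either incomparable or already joined by the directed path $a\to b\to c$; in the incomparable case the shortcut-arc position is vacuously impossible, so the argument only simplifies.
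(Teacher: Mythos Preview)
Your plan has a genuine gap, and it lies exactly where you flagged it: the source/sink requirement cannot in general be reconciled with span-minimality. Take the directed path $v_1\to v_2\to v_3\to v_4\to v_5$ (with $\phi$ this orientation). Every span-minimal bad triple has span~$2$, namely $(v_i,v_{i+1},v_{i+2})$ for $i\in\{1,2,3\}$; none of these is source-to-sink, and in fact there is \emph{no} bad triple at all from the unique source $v_1$ to the unique sink $v_5$, because $v_1$ and $v_5$ have no common neighbour. So the ``push only among span-minimal triples while preserving span'' step cannot succeed: pushing strictly lowers (or raises) a level and hence strictly enlarges the span, and there may be nothing to push to.

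The repair---and this is what the paper does---is to drop the source/sink requirement entirely and handle the path-arc positions by a different, simpler device. Choose a bad triple $u\to v\to w$ of minimum level-span (no further conditions), add the arc $u\to w$, and suppose a shortcut $S$ appears. If $u\to w$ lies on the Hamiltonian path of $S$ (any of your three path-arc positions), replace that arc by the detour $u\to v\to w$; acyclicity forces $v\notin V(S)$, so this yields an induced subdigraph on $V(S)\cup\{v\}$ in $\phi(G)$ that still has the old long arc $w_1\to w_n$ and now contains the non-edge $uw$, hence is a shortcut already in $\phi(G)$, a contradiction. Only the shortcut-arc position remains, and your span-minimality argument for that case is correct and matches the paper's. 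In short: your span-minimality half is right; replace your source/sink device for the path-arc half with the substitution $u\to w\ \rightsquigarrow\ u\to v\to w$.
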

\begin{proof}
Assume $G$ is not a comparability graph, which means that $\phi$ cannot be a transitive orientation of $G$ since $G$ is not a comparability graph.
Hence, there exist three vertices $u, v, w$ of $G$ such that there is a directed path $u\to v \to w$ in $\phi(G)$ but there is no directed edge $u \to w$ in $\phi(G)$.


Consider a good $\{V_i\}_{i\in[m]}$-partition of $V(G)$. 
Choose a pair of two non-adjacent vertices $u$ and $w$ in $G$ with minimum $k-i$ among all pairs satisfying the following two properties.
\begin{enumerate}[(i)]
\item $u \in V_i$ and $w \in V_k$ where $1 \leq i < k-1 \leq m-1$.
\item There exists a vertex $v \in V_j$ where $i < j < k$ such that $uv, vw \in E(G)$.
Note that those edges are oriented as $u\to v$ and $v\to w$ in $\phi(G)$.
\end{enumerate}
We claim that the graph $G+uw$ obtained from $G$ by adding the edge $uw$ has a semi-transitive orientation $\psi$ such that $\psi(G) = \phi(G)$ an $\psi(uw) = u\to w$.

Suppose there is a shortcut $S$ in $\psi(G+uw)$.
Note that $S$ must contain $u\to w$.
If $u\to w$ is part of the longest path in $S$, then $\phi(G)$ already contained a shortcut, namely, the directed graph $S'$ obtained from $S$ by replacing the directed edge $u\to w$ with the directed path $u\to v\to w$.
Therefore, it must be that $u$ and $w$ is the source and sink, respectively, of $S$.
Let $u\to v_1 \to \cdots \to v_t \to w$ be the longest directed path in $S$.
For each $z\in[t]$, the vertex $v_z$ is contained in $V_\beta$ for some $i <\beta < k$.
Hence, the minimality of $k-i$ implies that $u \to v_z$ and $v_z\to w$ exist for every $z\in[t]$, and $v_r\to v_s$ for every $1\leq r<s\leq t$ in $\phi(G)$.
This contradicts the assumption that $S$ is a shortcut; $S$ is a transitive orientation of a complete graph.
\end{proof}

\begin{lemma}\label{complmult}
Let $\phi$ be a transitive orientation of a comparability graph $G$. 
If $G$ is not a complete multipartite graph, then it has a pair of non-adjacent vertices $x$ and $y$ such that $\phi$ can be extended to a transitive orientation of $G+xy$. 
\end{lemma}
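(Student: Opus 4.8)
The plan is to work with the good $\{V_i\}_{i\in[m]}$-partition of $\phi$ and to show that $G$ fails to be complete multipartite precisely when some pair of non-adjacent vertices lies in two different parts. Writing $\mathrm{lev}(v)=i$ for $v\in V_i$, I would first record the two basic facts coming from the definition of the good partition: if $a\to b$ in $\phi(G)$ then $\mathrm{lev}(a)<\mathrm{lev}(b)$, and consequently each $V_i$ is an independent set (already noted in the excerpt). From these, any adjacent pair in two distinct parts is forced to be comparable, while same-part pairs are always non-adjacent; hence $G$ is complete multipartite if and only if every two vertices from distinct parts are adjacent. In particular, if $G$ is \emph{not} complete multipartite, then there is a non-adjacent pair lying in two distinct parts.

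Among all non-adjacent pairs $\{x,y\}$ with $\mathrm{lev}(x)<\mathrm{lev}(y)$, I would choose one maximizing the gap $\mathrm{lev}(y)-\mathrm{lev}(x)$, set $i=\mathrm{lev}(x)$ and $k=\mathrm{lev}(y)$, and define $\psi$ by keeping $\psi(G)=\phi(G)$ and orienting the new edge as $x\to y$. The goal is to show $\psi$ is a transitive orientation of $G+xy$. Since $\phi$ is already transitive, the only new length-two directed paths are those through $x\to y$: a path $a\to x\to y$ for each in-neighbor $a$ of $x$, and a path $x\to y\to b$ for each out-neighbor $b$ of $y$. Transitivity of $\psi$ therefore reduces to the two containments $N^-(x)\subseteq N^-(y)$ and $N^+(y)\subseteq N^+(x)$, where $N^-$ and $N^+$ denote in- and out-neighborhoods in $\phi(G)$.

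The crux is to verify these two containments from the maximality of the gap. For the first, suppose $a\to x$ but $a$ is not an in-neighbor of $y$; since $\mathrm{lev}(a)<i<k$, the vertices $a$ and $y$ cannot be adjacent (an edge between them would be forced to point $a\to y$ by the level inequality), so $\{a,y\}$ is a non-adjacent pair in distinct parts of gap $k-\mathrm{lev}(a)>k-i$, contradicting maximality. The second containment follows by the symmetric argument applied to an out-neighbor $b$ of $y$ that fails to be an out-neighbor of $x$. With both containments in hand, the closing edges $a\to y$ and $x\to b$ already lie in $\phi(G)=\psi(G)$, so every new length-two path closes up and $\psi$ is transitive (hence acyclic, as any transitive orientation is). The main obstacle, and the only genuinely creative step, is identifying the correct pair to join: the maximum-gap choice is exactly what guarantees the nesting of in- and out-neighborhoods, which in turn ensures that adding $x\to y$ forces no further comparabilities and keeps the orientation transitive. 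Everything else is a routine check against the good-partition inequalities.
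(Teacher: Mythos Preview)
Your proof is correct. Both you and the paper work with the good $\{V_i\}_{i\in[m]}$-partition and select an extremal non-adjacent pair from distinct levels, then orient the new edge from the lower to the higher level and verify transitivity by checking the two new families of length-two paths. The difference is in the extremal criterion: the paper first takes $v$ at the \emph{highest} level $k$ such that all vertices above level $k$ are universal, and then takes $u$ at the \emph{lowest} level among non-neighbours of that particular $v$; you instead take a non-adjacent pair $\{x,y\}$ of \emph{maximum level gap}. These choices can genuinely differ (for instance, with levels $1,2,3,4$ and non-adjacent pairs only at levels $(1,3)$ and $(3,4)$, the paper picks a gap-$1$ pair while you pick the gap-$2$ pair), but both work. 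Your criterion is the more symmetric one and yields directly the nesting $N^-(x)\subseteq N^-(y)$ and $N^+(y)\subseteq N^+(x)$, which is a clean way to phrase exactly what transitivity needs; the paper's layered choice achieves the same two closure conditions via the maximality of $k$ for the out-side and the minimality of $k'$ for the in-side. One small remark: your ``if and only if'' characterisation of complete multipartite via the good partition is stronger than you need---only the easy direction (all cross-level pairs adjacent $\Rightarrow$ complete multipartite) is used, and the converse requires arguing that the good partition coincides with the multipartition, which you did not spell out.
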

\begin{proof}
Consider a good $\{V_i\}_{i\in[m]}$-partition of $V(G)$. 
Among all vertices of $G$, let $v \in V_k$ with maximum $k$ such that the following holds.

\begin{itemize}
\item Every vertex $w\in V_{k'}$ with $k' > k$ is adjacent to every vertex that is not in $V_{k'}$.
\item There is a vertex $u \in V_{k'}$ with $k'<k$ where $u$ and $v$ are not adjacent.
\end{itemize}

Among all vertices of $G$ that are not adjacent to $v$, let $u \in V_{k'}$ be a vertex with $k'<k$ and minimum $k'$.
Note that such a vertex exists by our choice of $v$. 
We claim that the directed graph $\phi(G+uv)$ obtained from $\phi(G)$ by adding $u\to v$ is a transitive orientation.

Since $\phi(G)$ is already transitive, it suffices to show that if there is a directed path $v_1 \to v_2 \to v_3$ in $\phi(G+uv)$ such that either $v_1 = u, v_2 = v$, and $v_3\not\in\{ u,v\}$ or $v_1\not\in\{u,v\}, v_2=u$, and $v_3=v$, then there is a directed edge $v_1 \to v_3$ in $\phi(G+uv)$.
If we have a directed path $u \to v \to v_3$ in $\phi(G+uv)$, then by the maximality of $k$, 
it must be that the directed edge $u \to v_3$ exists in $\phi(G)$.
If we have a directed path  $v_1 \to u \to v$ in $\phi(G+uv)$, then by the minimality of $k'$, 
it must be that the directed edge $v_1\to v$ exists in $\phi(G)$.
\end{proof}

\begin{lemma}\label{complt}
Let $\phi$ be a transitive orientation of a complete multipartite graph $G$. 
If $G$ is not a complete graph, then it has a pair of non-adjacent vertices $x$ and $y$ such that $\phi$ can be extended to a transitive orientation of $G+xy$. 
%
\end{lemma}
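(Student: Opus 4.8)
The plan is to first pin down exactly what a transitive orientation of a complete multipartite graph looks like, and then observe that adding an edge inside a single part is essentially free. Write the vertex classes (parts) of $G$ as $U_1,\dots,U_r$, so that two vertices are non-adjacent precisely when they lie in a common part. The key structural fact I would establish first is that for any part $U$ and any two vertices $x,y\in U$, the vertices $x$ and $y$ have the same set of in-neighbours and the same set of out-neighbours in $\phi(G)$. Since $G$ is complete multipartite, $x$ and $y$ already share the neighbourhood $V(G)\setminus U$, so it suffices to show that every $z\notin U$ is oriented the same way towards $x$ and towards $y$. This is a one-line transitivity argument: if, say, $z\to x$ but $y\to z$, then the path $y\to z\to x$ would force the edge $y\to x$, contradicting that $x,y$ lie in the same part and are non-adjacent; the reverse case $x\to z\to y$ is identical. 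Equivalently, all edges between any two fixed parts are oriented the same direction, so $\phi$ induces a linear order on the parts, which is exactly the order recorded by the good $\{V_i\}_{i\in[m]}$-partition: each $V_i$ is a single part, and phrasing the fact this way keeps it consistent with Lemma~\ref{complmult} and Lemma~\ref{semi-compar}.

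With this in hand the construction is immediate. Since $G$ is not complete, some part $U$ has at least two vertices; I would choose $x,y\in U$ and let $\psi$ be the orientation of $G+xy$ that agrees with $\phi$ on $G$ and orients the new edge as $x\to y$. I then verify that $\psi$ is transitive by examining the only directed paths of length two that the new edge can create. A path $a\to x\to y$ has $a$ an in-neighbour of $x$, hence of $y$, so $a\to y$ is already present; a path $x\to y\to b$ has $b$ an out-neighbour of $y$, hence of $x$, so $x\to b$ is already present. No other triple is affected, because every directed path of length two that avoids the new edge already lies in $\phi(G)$ and is closed by transitivity of $\phi$. Acyclicity survives for the same reason: a directed cycle through $x\to y$ would yield a directed $y$-to-$x$ path in $\phi(G)$, making $x$ and $y$ comparable, which they are not.

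The genuinely substantive step is the structural claim that within a part all vertices share their in- and out-neighbourhoods; everything after that is finite bookkeeping over the length-two paths through the added edge. I expect the only care needed is to record that the chosen direction $x\to y$ is immaterial, since the symmetry of the shared-neighbourhood property makes $y\to x$ work just as well, and to state the claim so that it meshes cleanly with the good-partition framework used in the preceding lemmas.
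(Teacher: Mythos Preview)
Your proposal is correct and follows essentially the same route as the paper: both arguments pick two vertices $x,y$ in a common part (equivalently, in the same level $V_z$ of the good partition), orient the new edge $x\to y$, and verify transitivity by checking the two kinds of length-two paths through that edge using the fact that $x$ and $y$ share their in- and out-neighbourhoods. Your explicit derivation that vertices in a part have identical neighbourhoods is exactly what underlies the paper's claim that each $V_i$ is a single part, and your separate acyclicity check is harmless but redundant, since transitivity already forces acyclicity.
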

\begin{proof}
Consider a good $\{V_i\}_{i\in[m]}$-partition of $V(G)$.
If $G$ is not a complete graph, then for some $z\in[m]$, $V_z$ contains at least two vertices $u$ and $v$.
We will show that the orientation $\phi(G+uv)$ obtained from $\phi(G)$ by adding $u\to v$ is transitive.

Similarly as in the previous lemma, since $\phi(G)$ is already transitive, it is sufficient to show that if there is a directed path $v_1 \to v_2 \to v_3$ in $\phi(G+uv)$ such that either $v_1 = u, v_2 = v$, and $v_3\not\in\{u,v\}$ or $v_1\not\in\{ u,v\}, v_2=u$, and $v_3=v$, then there is a directed edge $v_1 \to v_3$ in $\phi(G+uv)$.
If we have a directed path $u \to v \to v_3$ in $\phi(G+uv)$, then the directed edge $u \to v_3$ exists in $\phi(G)$ because $v_3 \in V_{z'}$ for some $z' >z$.
If we have a directed path $v_1 \to u \to v$ in $\phi(G+uv)$, then the directed edge $v_1\to v$ exists in $\phi(G)$ because $v_1 \in V_{z'}$ for some $z'< z$.
\end{proof}

Thus, starting from a semi-transitively orientable graph $G$ that is not the complete graph, one can obtain the complete graph by applying a sequence of edge-additions while preserving the semi-transitive orientability.


%
%
%
%

\section{Edge-liftings}\label{sec:lift}

Let $P=uvw$ be a path of length $2$ of a graph $G$.
The graph obtained by \df{lifting} $uvw$ is the graph obtained from $G$ by removing the edges $uv$ and $vw$ and adding the edge $uw$, if $uw$ did not exist already.
If lifting $P$ preserves the semi-transitive orientability of $G$, then adding the edge $uw$ preserves the semi-transitive orientability of $G\setminus v$.
Recall that if $G$ is semi-transitively orientable, then so is $G\setminus v$ by the heredity property of semi-transitive orientations.

Note that a graph always has a path of length $2$ that can be lifted, unless the graph has maximum degree at most $1$. 
We show that a semi-transitively orientable graph always has a path of length $2$ that can be lifted, unless the graph itself has maximum degree at most $1$. 

Recall that 
given a graph $G$, a good $\{V_i\}_{i\in[m]}$-partition of $V(G)$ is a partition of $V(G)$ into $m$ sets $V_1, \ldots, V_m$ such that $V_1$ is the set of all sources of $\phi(G)$ and for each $i\in[ m-1]$, $V_{i+1}$ is the set of all sources of $\phi\left(G\setminus \bigcup_{j\in[i]}V_j\right)$.

\begin{theorem}\label{lift}
If $\phi$ is a semi-transitive orientation of a semi-transitively orientable graph $G$ with  maximum degree at least $2$, then it has a path $P$ of length $2$ such that $\phi$ can be extended to a semi-transitive orientation of the graph obtained from $G$ by lifting $P$. 
\end{theorem}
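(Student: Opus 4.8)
The plan is to extend $\phi$ by leaving it untouched on every surviving edge and choosing an orientation only for the newly created edge $uw$ (in the case $uw\notin E(G)$). Throughout I would work with a good $\{V_i\}_{i\in[m]}$-partition of $V(G)$, in which every edge runs from a lower-indexed part to a higher-indexed part, and split according to whether $\phi$ is transitive. If $\phi$ is \emph{not} transitive there is an open directed path $u\to v\to w$ with $uw\notin E(G)$, and I would lift such a path; if $\phi$ is transitive then $G$ is a comparability graph and every open path of length $2$ is a ``cherry'', i.e.\ its centre is a local source or a local sink, which I would treat separately.

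For the first case I would mimic the extremal choice in Lemma~\ref{semi-compar}: among all open directed paths $u\to v\to w$ choose one with $u\in V_i$ and $w\in V_k$ minimizing $k-i$, and set $\phi'$ to be $\phi$ together with $u\to w$. Acyclicity is immediate since $u$ precedes $w$ in every topological order. To see that $\phi'$ is semi-transitive I would assume a shortcut $S$ in the lifted orientation and classify it by how it meets the three changed edges. If $S$ uses $u\to w$ as its short edge, then the consecutive triples of its long path are open directed paths of length $2$ in $G$, so minimality forces the chords $u\to a_z$ and $a_z\to w$ to be present; semi-transitivity of $\phi$ then upgrades $V(S)$ to a clique, contradicting that $S$ is a shortcut. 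If $S$ uses $u\to w$ in the interior of its long path, I would reinsert $v$ via $u\to v\to w$ to recover a genuine shortcut of $\phi(G)$. The remaining possibility is that $S$ avoids $u\to w$ but is affected by the deletion of $uv$ or $vw$; this is exactly an edge-deletion shortcut, which I would rule out using the source/sink-of-a-maximal-clique analysis of Theorem~\ref{thm:strongtoempty}, by choosing $v$ (and hence the deleted edges) so that they sit as source--sink edges of their maximal cliques.

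For the transitive case I would first dispose of the situation in which $G$ is a disjoint union of cliques (equivalently, no two vertices with a common neighbour are non-adjacent): since the maximum degree is at least $2$ some clique has order at least $3$, and lifting a path inside it amounts to deleting two edges of a transitive tournament, which I would check is harmless when the two edges are a ``consecutive'' edge and the source--sink edge meeting at the sink. Otherwise $G$ has an open cherry, which I would lift. \emph{The hard part will be this cherry case.} Because the centre of a cherry is a local source or sink rather than a transit vertex, the reinsertion trick of the previous paragraph is unavailable, so a shortcut that uses the new edge in the interior of its long path cannot be pulled back to $\phi(G)$; a small example shows that the naive layer-based orientation of $uw$ can in fact create such a shortcut. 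I expect to resolve this by choosing the cherry, and the orientation of $uw$, so that the new edge is incident to a global source or sink of $\phi(G)$: it is then forced to be the first or last edge of every directed path through it, hence never interior, and the residual short-edge case is handled exactly as before (transitivity of $\phi$ kills any long $u$--$w$ path, since it would force $uw\in E(G)$). Showing that such a cherry always exists, and that this choice simultaneously keeps the two deleted edges harmless, is the main obstacle.
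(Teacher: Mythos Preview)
Your case split (transitive versus non-transitive $\phi$) differs from the paper's, which splits on whether the good partition has $m=2$ or $m\ge 3$ levels. When $m=2$ every edge of $\phi(G)$ runs from $V_1$ to $V_2$, so after lifting \emph{any} length-$2$ path the resulting orientation has no directed walk of length $\ge 3$ at all, and semi-transitivity is immediate. This one-line observation disposes of your entire ``cherry'' case and the obstacle you flag there: the scenario you worry about (the new edge sitting in the interior of a long directed path) simply cannot occur when $m=2$. Your transitive case is harder precisely because it also sweeps in the situation $m\ge 3$ with $\phi$ transitive, which the $m=2$ argument does not touch.

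For $m\ge 3$ the paper makes the same extremal choice as you (an open directed path $u\to v\to w$ with $u\in V_i$, $w\in V_k$, $uw\notin E(G)$, minimizing $k-i$) and lifts it. Your analysis of the ``short edge'' sub-case has a gap you should fix: from minimality and semi-transitivity of $\phi$ you correctly deduce that $V(S)$ spans a clique \emph{in $G$}, but $S$ lives in $G'$, where $uv$ and $vw$ have been deleted. If the lifted middle vertex $v$ happens to lie on the long path of $S$ (forcing $t\ge 3$), then $G'[V(S)]$ is a transitive tournament with the two edges $u\to v$ and $v\to w$ removed, and this \emph{is} a shortcut whenever $v$ is not the second or the penultimate vertex in the tournament order. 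So ``clique in $G$'' does not by itself contradict ``shortcut in $\psi(G')$''; the argument from Lemma~\ref{semi-compar} does not transfer verbatim because there you were only adding an edge, not also deleting two. The paper records the sharper conclusion ``$t=1$'' at this point. Your instinct that the two deleted edges require their own care (your third sub-case) is sound, and this is exactly where it matters; note also that your proposed remedy---choosing $v$ so that $uv$ and $vw$ are source--sink edges of maximal cliques---may conflict with the level-gap minimality, so that compatibility would still have to be argued.
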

\begin{proof}
Consider a good $\{V_i\}_{i\in[m]}$-partition of $V(G)$. 
Suppose $m \geq 3$.
Choose a pair of two non-adjacent vertices $u$ and $w$ in $G$ with minimum $k-i$ among all pairs satisfying the following two properties.
\begin{enumerate}[(i)]
\item $u \in V_i$ and $w \in V_k$ where $1 \leq i < k-1 \leq m-1$.
\item There exists a vertex $v \in V_j$ where $i < j < k$ such that $uv, vw \in E(G)$.
Note that those edges are oriented as $u\to v$ and $v\to w$ in $\phi(G)$.
\end{enumerate}
Let $G'$ be the graph obtained from $G$ by lifting the path $uvw$.
We claim that $G'$ has a semi-transitive orientation $\psi$ such that $\psi(G'\setminus uw) = \phi(G'\setminus uw)$ and $\psi(uw) = u\to w$.

If there is a shortcut $S$ in $\psi(G')$, then $S$ must contain $u\to w$.
If $u\to w$ is part of the longest path in $S$, then $\phi(G)$ already contained a shortcut, namely, the directed graph $S'$ obtained from $S$ by either adding the directed path $u\to v\to w$ or replacing the directed edge $u\to w$ with the directed path $u\to v\to w$.
Therefore, it must be that $u$ and $w$ is the source and sink, respectively, of $S$. 
Let $P = u\to v_1 \to \cdots \to v_t \to w$ be the longest directed path in $S$.
By the minimality of $k-i$, it implies that we actually have $t = 1$, thus $P$ has length $2$.
Yet, this contradicts the fact that a shortcut has a directed path of length at least $3$.

Now suppose $m = 2$.
We claim that in this case lifting any path of length $2$ preserves the semi-transitive orientability.
Without loss of generality, assume that we have $u, w\in V_1$ and $v \in V_2$ such that $uv,vw\in E(G)$.
Let $G'$ be the graph obtained from $G$ by lifting the path $uvw$.
Now, any acyclic orientation $\psi$ satisfying $\psi(G'\setminus uw) = \phi(G'\setminus uw)$ is a semi-transitive orientation on $G'$.
It is not hard to see that $\psi(G')$ does not contain a shortcut since every edge other than $uw$ is oriented by $v_1 \to v_2$ where $v_1\in V_1$ and $v_2\in V_2$.
\end{proof}

Thus, starting from a semi-transitively orientable graph $G$, one can obtain a graph where each component is an edge or a single vertex by applying a sequence of edge-liftings while preserving the semi-transitive orientability.

\section{Graph products}\label{sec:prod}

Assume $G$ and $H$ are both semi-transitively orientable graphs. 
It is known that $G\square H$, the Cartesian product of $G$ and $H$, is also semi-transitively orientable~\cite{KL2015}.
We investigate the semi-transitive orientability for three graph products: the tensor product, lexicographic product, and strong product. 
By constructing explicit examples, we show that the aforementioned three graph products do not necessarily preserve the semi-transitive orientability. 
In particular, the case of tensor product and lexicographic product was explicitly asked in~\cite{KL2015}.
See Table~\ref{tab:prod} for definitions of the tensor product, lexicographical product, and strong product.

Recall that the wheel on $6$ vertices is not semi-transitively orientable, and every graph on at most $5$ vertices is indeed semi-transitively orientable by Theorem~\ref{thm:6vx}. 
In all constructions below, we use two graphs $G$ and $H$ that both have at most $5$ vertices to obtain a graph product that contains $W_5$ as an induced subgraph. 
We were able to find such examples for the tensor product, lexicographical product, and strong product.
See Figure~\ref{fig:tensor} and Figure~\ref{fig:lexstro} for illustrations.

\begin{table}[h]
\centering
\begin{tabular}{c||c|c|c}\label{tab:prod}
Graph product & denoted by & vertex set & edge condition\\
\hline\hline
Tensor & $G\times H$ & $V(G)\times V(H)$ & $u_1\sim v_1 \mbox{ and }u_2\sim v_2$\\
\hline
Lexicographical& $G\cdot H$ & $V(G)\times V(H)$ & $ u_1\sim v_1 $\\
&  &  &  or $(u_1=v_1 \mbox{ and } u_2\sim v_2)$\\
\hline
Strong & $G\boxtimes H$ & $V(G)\times V(H)$ & \,\quad $(u_1=v_1\mbox{ and }u_2\sim v_2) $\\
&  &  & or $ (u_1\sim v_1\mbox{ and }u_2=v_2) $\\
 &  &  & or $ (u_1\sim v_1\mbox{ and }u_2\sim v_2)$\\
\end{tabular}
\caption{Definitions of tensor product, lexicographical product, and strong product of two graphs $G$ and $H$. The edge condition is when $(u_1, u_2)\sim(v_1, v_2)$.}
\end{table}


\begin{figure}[h]
    \centering
    \includegraphics[scale=0.7]{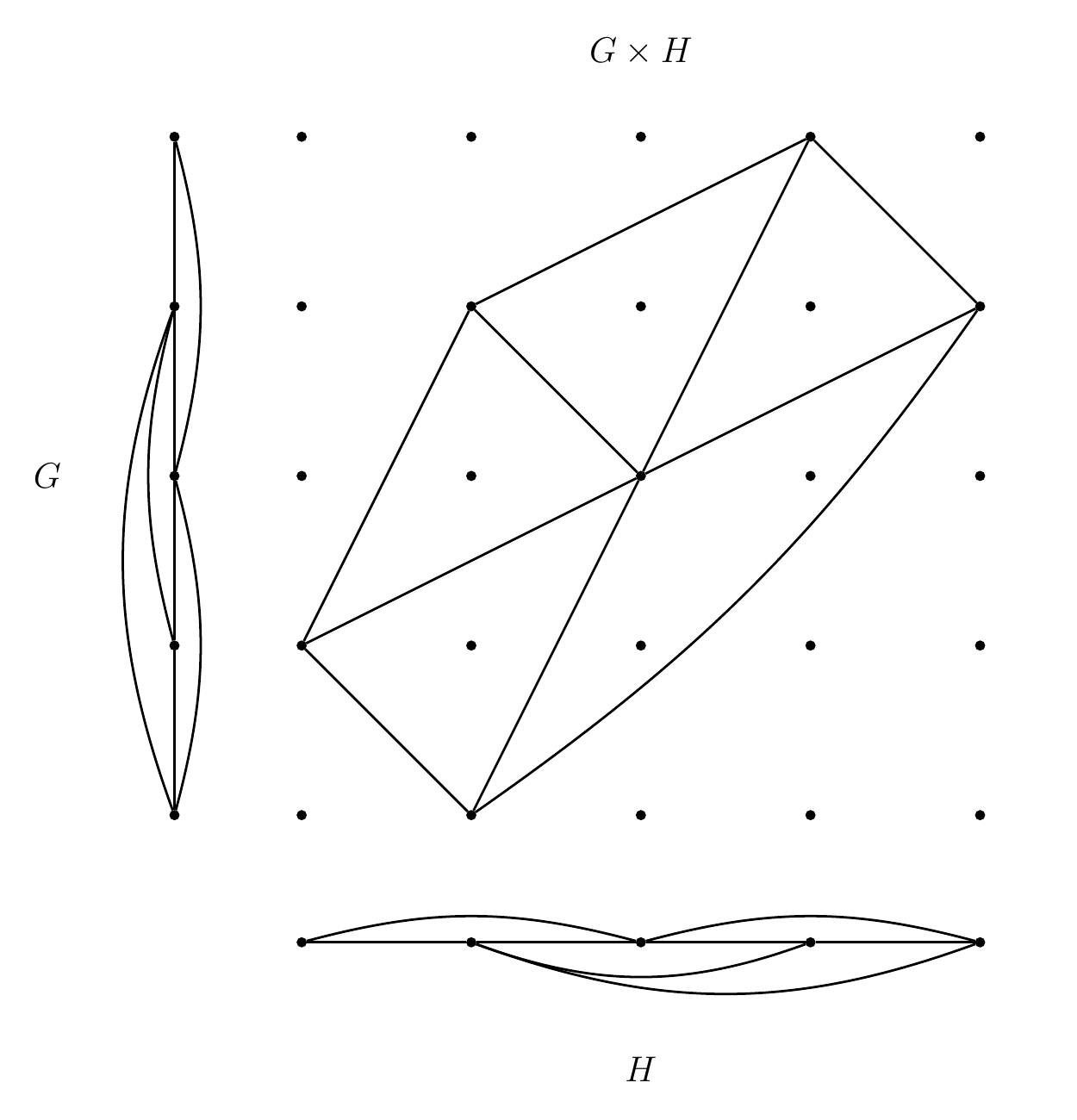}
    \caption{The tensor product of two semi-transitively orientable graphs that contains $W_5$ as an induced subgraph}
    \label{fig:tensor}
\end{figure}

\begin{figure}[h]
    \centering
    \includegraphics[scale=0.675]{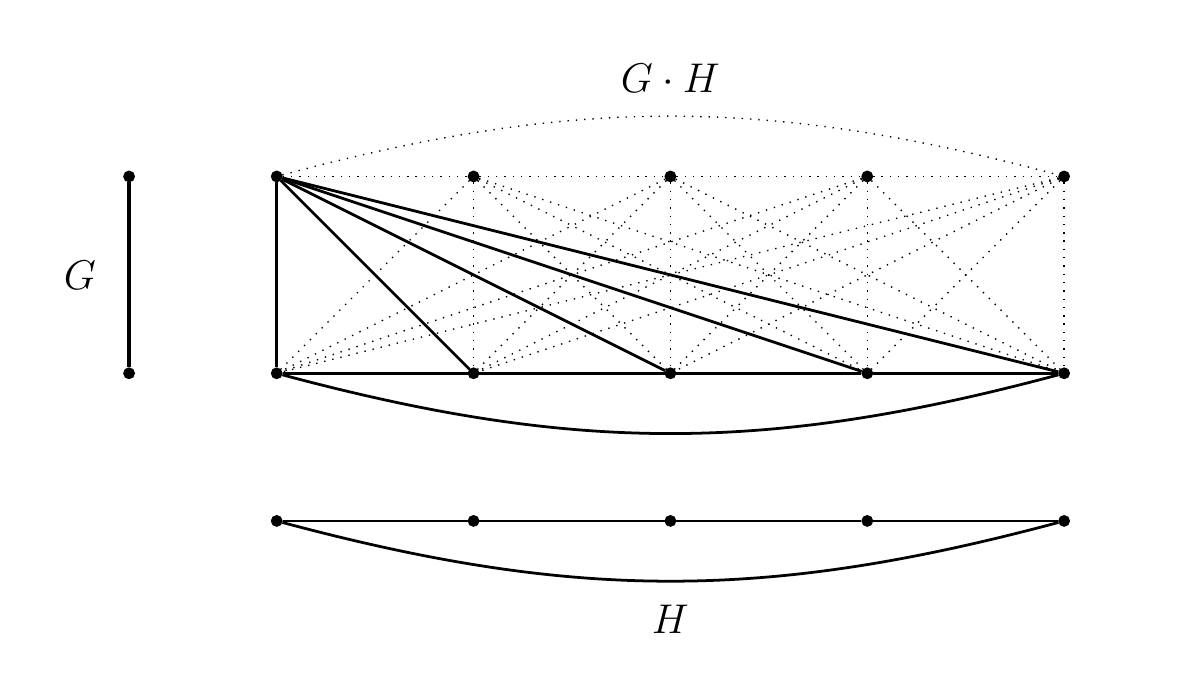}
    \includegraphics[scale=0.675]{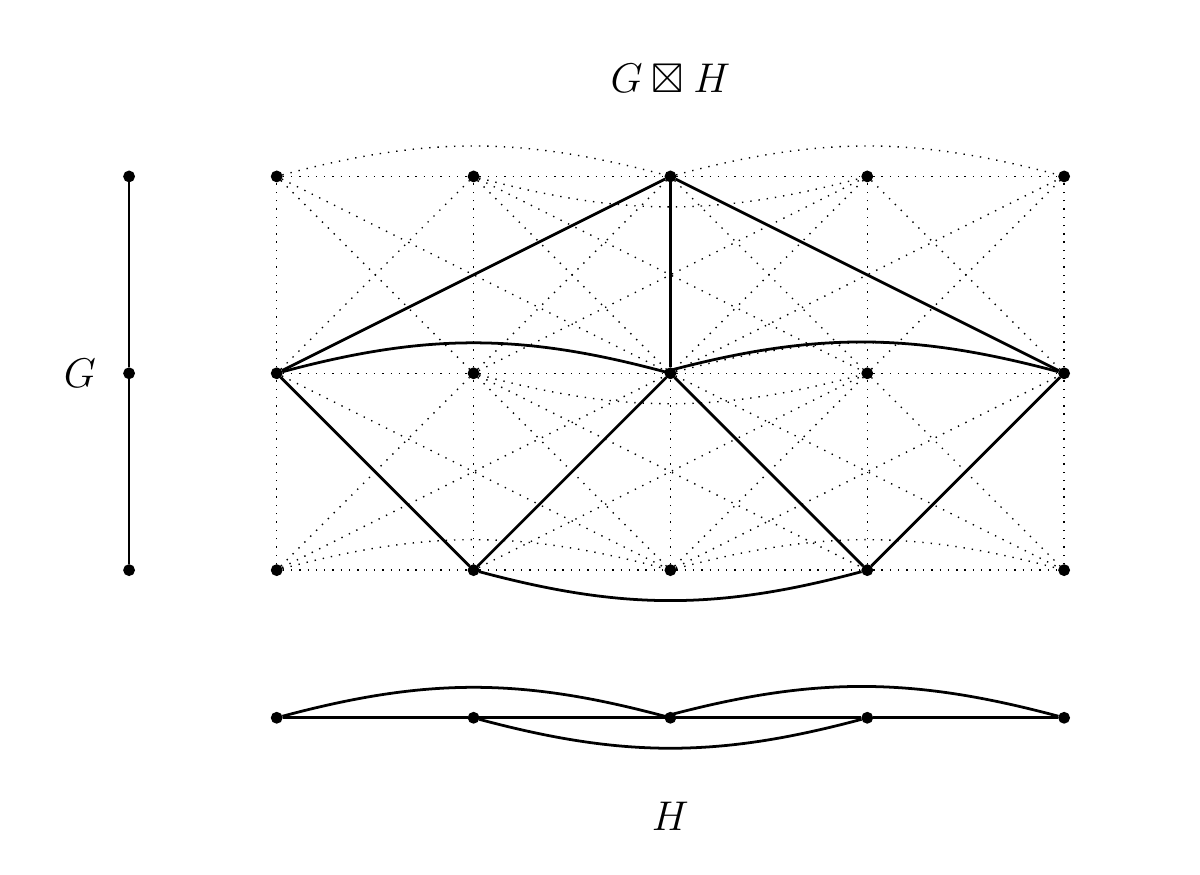}
    \caption{The lexicographic product (left) and the strong product (right) of two semi-transitively orientable graphs that contain $W_5$ as an induced subgraph}
    \label{fig:lexstro}
\end{figure}

\section{Concluding remarks}\label{sec4}

The main results in this paper show that for edge-deletions, edge-additions, and edge-liftings, if a certain graph operation can be executed on a semi-transitively orientable graph, then the same graph operation can be executed while preserving the semi-transitive orientability. 
For edge-deletions, edge-additions, and edge-liftings, this gives a positive answer to Question~\ref{que-main2}, which we reiterate here. 

\begin{ques2}
Given a graph operation and a semi-transitively orientable graph $G$, can the graph operation be executed while preserving the semi-transitive orientability?
\end{ques2}

It would be interesting to determine other graph operations that behave in the same manner. 
In particular, is Question~\ref{que-main2} true for edge-contractions?

\begin{ques}\label{que:contract}
Given a non-empty semi-transitively orientable graph $G$, is there an edge $e$ where the graph obtained from $G$ by contracting $e$ is also semi-transitively orientable?
\end{ques}

We know edge-contractions do not always preserve the semi-transitive orientability, as mentioned in the introduction. 
Finding sufficient conditions for edge-contractions to preserve the semi-transitive orientability would be a good starting point in investigating Question~\ref{que:contract}.
This would be in the spirit of Question~\ref{que-main}, which we repeat here. 

\begin{ques1}
When does a graph operation preserve the semi-transitive orientability?
\end{ques1}

Theorem~\ref{odd} shows the existence of a graph with no short odd cycles that does not have a semi-transitive orientation.
Yet, no information is gained regarding even cycles. 
Since $W_5$ is a graph with no even cycle of length at least $8$ that does not have a semi-transitive orientation, it is natural to ask the following two questions:

\begin{ques}
Is there a graph with no cycle of length $6$ that is not semi-transitively orientable?
\end{ques}
\begin{ques}
Is there a graph with no cycle of length $4$ that is not semi-transitively orientable?
\end{ques}


\bibliographystyle{alpha}

%

\end{document}